\newtheorem{theorem}{Theorem}[section]
\newtheorem{proposition}[theorem]{Proposition}
\newtheorem{corollary}[theorem]{Corollary}
\newtheorem{lemma}[theorem]{Lemma}
\newtheorem{remark}[theorem]{Remark}
\newtheorem{definition}[theorem]{Definition}
\newtheorem{example}[theorem]{Example}
\newtheorem{question}[theorem]{Question}
\newcommand{\PP}{\mathscr P}
\newcommand{\zN}{\mathbb N}
\newcommand{\sub}{\subseteq}
\newcommand{\zT}{\mathbb T}
\newcommand{\f}{\frac}
\newcommand{\AP}{\mathcal {AP}}
\newcommand{\kAP}{\mathcal {AP}\!_b}
\newcommand{\A}{\mathcal {F}}
\newcommand{\PS}{\mathcal {PS}}
\newcommand{\BD}{\overline{\mathcal {BD}}}
\newcommand{\F}{\mathcal {F}}
\renewcommand{\l}{\left(}
\renewcommand{\r}{\right)}
\title{Frequently recurrence properties and block families}
\keywords{Frequently recurrent operators, Reiteratively hypercylic operators, hereditary upward families}
\subjclass[2010]{
47A16, 
    37B20  	
	37A45  
		47B37  	
}
\author{Rodrigo Cardeccia, Santiago Muro}
\address{Instituto Balseiro, Universidad Nacional de Cuyo – C.N.E.A. and CONICET, San Carlos de Bariloche, Rep\'ublica Argentina} \email{rodrigo.cardeccia@ib.edu.ar} 
\address{FCEIA, UNIVERSIDAD NACIONAL DE ROSARIO AND   CIFASIS, CONICET}
\email{muro@cifasis-conicet.gov.ar}
\thanks{Partially supported by ANPCyT PICT 2018-04250, UBACyT 20020130300052BA, PIP 11220130100329CO and CONICET}
\begin{document}

\begin{abstract}
We prove that reiteratively hypercyclic operators have perfect spectrum. Consequently, it follows that there exist separable infinite dimensional Banach spaces that do not support any reiteratively hypercyclic
operator.
For this, we study $\F$-recurrence and almost $\mathcal {F}$-recurrence of operators for general families and in particular for a  special class of families, called \emph{block} families.
\end{abstract}

\maketitle

\section{Introduction and main results}

Reiterative hypercyclicity is a strengthening of hypercyclicity, which is the main object
of study in linear dynamics—an emerging field at the intersection of operator theory and
functional analysis, closely tied to the invariant subspace problem \cite{BayMat09,beauzamy1988introduction}. Within
this framework, there exists a strict hierarchy of linear dynamical notions, including frequent
hypercyclicity or chaoticity, which implies reiterative hypercyclicity, and reiterative hypercyclicity, in turn, implies weak mixing and hence hypercyclicity \cite{Bes16}. It is a known
fact that any infinite-dimensional separable Banach space supports a weakly mixing operator \cite{ansari1997existence,bernal1999hypercyclic,grivaux2005hypercyclic}.

In 2011, Argyros and Haydon \cite{ArgHay11} (developing the techniques for the construction of Hereditarily Indecomposable spaces  \cite{GowMau93}) constructed a Banach space with the peculiar property of supporting very
few operators. Specifically, every operator on this space can be expressed as a sum of a scalar
multiple of the identity plus a compact operator. This construction is notable for various
properties it possesses. Indeed, Lomonosov’s famous theorem shows that every operator
that commutes with a non-zero compact operator must have a non-trivial closed invariant
subspace. As a consequence, every continuous linear operator on the Argyros-Haydon space
is guaranteed to have a non-trivial closed invariant subspace, making it a groundbreaking
discovery in the field of operator theory. As a matter of fact, no other infinite-dimensional
separable Banach space with this property is currently known besides the one constructed
by Argyros and Haydon. So the Argyros-Haydon space remains a unique and significant
example in this regard.

It has been established that neither a chaotic nor a frequently
hypercyclic operator can be supported by the Argyros-Haydon space \cite{BonMarPer01,Shk09}.

The concept of reiterative hypercyclicity is inspired in that of frequent hypercyclicity but it shares some properties with hypercyclic operators, e.g. for a reiteratively hypercyclic operator (in contrast to a frequently hypercyclic one) the set of reiteratively hypercyclic vectors is residual, and every hypercyclic vector is automatically reiteratively hypercyclic. It is then natural to ask 
the following question (see e.g. \cite[Question 4.10]{BonGroLopPer22JFA} or \cite[Question 2.6]{MarPui21}).
\begin{question}\label{pregunta reiterative}
Does every infinite dimensional separable Banach space support reiteratively hypercyclic operators?
\end{question}

The primary purpose of this note is to establish an additional property of the Argyros-Haydon space, namely, its failure to support any reiteratively hypercyclic operator, in particular answering Question \ref{pregunta reiterative}.
\begin{theorem}\label{teo:argyros}
 The Argyros-Haydon space does not support a reiteratively hypercyclic operator.    
\end{theorem}
In relation to this theorem it would be interesting to know if a Banach space without frequently hypercyclic operators could support a reiteratively hypercyclic operator (see Question \ref{pregunta existe reiterative no existe frequente}).

We obtain our main result as a consequence of our investigations on the relationship between the concept of almost $\mathcal {F}$-recurrence and $\F$-hypercyclicity.
Recurrence is one of the oldest and most studied concepts in the theory of dynamical systems.  In contrast, in linear dynamics the main concept is that of hypercyclicity and a systematic study of recurrence began only around 10 years ago with the work of Costakis, Manoussos and Parissis \cite{CosManPar14,CosPar12}.  

On the other hand, based on the seminal articles \cite{Bes16,BonGro18} much of the interest in linear dynamics shifted towards the study of $\F$-hypercyclicity for different hereditarily upward (or Furstenberg) families of subsets of the natural numbers, hence including in a single theory concepts as hypercyclity, frequent hypercyclicity, reiterative hypercyclicity or chaos (see also \cite{CardeMur22}), among others.

At this point, it turns out to be natural to investigate $\F$-recurrence of linear operators for general families $\F$, a task recently initiated by Bonilla, Grosse-Erdmann, L\'opez-Mart\'inez and Peris \cite{BonGroLopPer22JFA}.  One of the main goals in \cite{BonGroLopPer22JFA} was to determine conditions under which an operator that is hypercyclic and $\F$-recurrent is necessarily $\F$-hypercyclic. Our point of view is slightly different, and a question we ask is whether hypercyclicity together with a strong property of recurrence ($\F$-recurrence) implies a weak property of frequent hypercyclicity ($\mathcal G$-hypercyclicity, for a  family $\mathcal G$ that is bigger than $\F$). It is worth mentioning that hypercyclicity together with the strongest possible form of recurrence, density of periodic points, does not necessarily imply frequently hypercyclicity. This was proved by Menet in \cite{Men17} who, answering one of the main open questions at that moment in linear dynamics, exhibited an example of an operator that is chaotic but not upper frequently hypercyclic.

With this aim in mind we study the block family $b\F$ associated to a given family $\F$, which is essentially formed by sets that are union of finite blocks of a given set $A\in \F$. We also introduce the notion of almost $\F$-recurrence and investigate the relationship between $b\F$-hypercyclicity and almost $\F$-recurrence plus hypercyclicity.

An application of our results on almost $\F$-recurrence to the family of sets of positive upper Banach density 
leads us to the following theorem which implies our main result.
\begin{theorem}\label{teo: perturbaciones identidad no reiter} 
On real or complex locally convex spaces, no compact perturbation of the identity can be reiteratively hypercyclic.
\end{theorem}

Almost $\mathcal {F}$-recurrence is also important for the understanding of the dynamics 
of backward shift operators: in \cite{BonGro18}, it was proved that, when the family $\F$ has the property of being \emph{upper}, $\F$-hypercyclic unilateral backward shifts are characterized by two conditions. We show in Section \ref{section shifts} that exactly the same two conditions  characterize almost $\mathcal {F}$-recurrent unilateral backward shifts for general, non necessarily upper, families. We also prove that under the assumption of $\F$ being an upper family, every  almost $\mathcal {F}$-recurrent unilateral backward shift is necessarily $\F$-hypercyclic. Moreover, we also show that two very similar conditions characterize almost $\mathcal {F}$-recurrent bilateral backward shifts, but in this case it is  unclear if they are also $\F$-hypercyclic.

The paper is organized as follows. Section \ref{section spectrum} is dedicated to the study of the spectrum
of reiteratively hypercyclic operators. We establish that the spectrum of a reiteratively
hypercyclic operator is perfect (Theorem  \ref{P_bd perfecto}). Consequently, Theorem \ref{teo:argyros} and Theorem \ref{teo: perturbaciones identidad no reiter} follow.
The proof of Theorem \ref{P_bd perfecto} relies on the notion of almost $\mathcal {F}$-recurrence.

In Section \ref{section block}, we delve into the concept of almost $\F$-recurrence for general families. Among
other results, we demonstrate that for block families, if an operator is hypercyclic and almost
$\F$-recurrent, then it is $b\F$-hypercyclic (Proposition \ref{equivalencia bF hiper}). Additionally, we establish a converse
of this result for adjoint operators: if an adjoint operator is almost $b\F$-recurrent, then it is
also almost $\F$-recurrent (Theorem \ref{PBF implica PF}). {We also establish that for CuSP families,} almost
$\F$-recurrence remains invariant under unimodular scalar multiplications (Theorem \ref{lambda T CuSP}).

In section \ref{section backwardshifts} we study almost $\mathcal {F}$-recurrence for backward shifts. We give a characterization of almost $\mathcal {F}$-recurrent unilateral and bilateral backward shifts. We also show several examples and counterexamples.

\section{Spectrum of reiteratively hypercyclic operators}\label{section spectrum}

In this section we prove the results stated in the Introduction and, in particular that 
there are infinite dimensional separable Banach spaces not supporting reiteratively hypercyclic operators (Theorem \ref{teo:argyros}). These results will be deduced from Theorem \ref{teo quasinil} below.
The proofs of some preliminary results are delayed to the next section. Unless explicitly specified, all the results hold for both complex and real Banach spaces.

Let us first recall some definitions. 

The \emph{upper Banach density} of a subset $A\subset \zN_0=\zN\cup\{0\}$ is $\overline{Bd}(A):=\lim_n \sup_k \f{\# (A\cap [k,k+n])}{n+1}$. The family of subsets of $\zN_0$ with positive upper Banach density is denoted by $\overline {\mathcal{BD}}$. 
The \emph{lower density} of a subset $A\subset \zN_0$ is $\underline {d} (A):=$ $\liminf_n  \f{\#(A\cap[0,n])}{n+1}$). The family of subsets of $\zN_0$ with positive lower density is denoted by $\underline {\mathcal{D}}$.

An operator $T$ on a locally convex space $X$ is \emph{reiteratively hypercyclic} (see \cite{Bes16}) if there is some $x\in X$ such that for any nonempty open set $U,$ the set of return times $N_T(x,U):=\{n\ge1\,:\, T^n(x)\in U\}$ (or just $N(x,U)$ if the operator is clear from the context) has positive upper Banach density.   

An operator $T$ is \emph{recurrent} if for every nonempty open set $U$ there is $x\in U$ such that $N(x,U)\ne \emptyset.$ 
A vector $x$ is \emph{recurrent} for $T$ if for every nonempty open set $U$ containing $x,$ the set of return times $N(x,U)\ne \emptyset.$ By \cite[Proposition 2.1]{CosPar12}, an operator is recurrent if and only if it has a dense set of recurrent vector.

A vector $x$ is \emph{reiteratively recurrent} (see \cite{BonGroLopPer22JFA}) for $T$ if for every nonempty open set $U$ containing $x,$ the set of return times $N(x,U)$ has  positive upper Banach  density. The operator $T$  is \emph{reiteratively recurrent} if the set of reiteratively recurrent vectors is dense in $X.$ 

We also need to introduce the following notion of recurrence, which is slightly  weaker than reiterative recurrence (see also Definition \ref{definition P_F}).
\begin{definition}
Let $T$ be a linear operator. We say that $T$ is \emph{almost reiteratively recurrent}, or almost $ \overline {\mathcal{BD}}$-recurrent,  provided that for every nonempty open set $U$, there is $x\in U$ such that $N(x,U)\in \overline {\mathcal{BD}}$.
\end{definition}
Note that by \cite[Proposition 2.1]{CosPar12}, almost reiteratively recurrent operators are recurrent.

A map $S:Y\to Y$ is said to be a factor of $T:X\to X$ if there exists a continuous and surjective map $J : X \to Y$ such that  $JT = SJ$. It is easy to see that if $T$ is almost reiteratively recurrent then any factor of $T$ is.
Finally, recall that an operator $S$ is said to be quasinilpotent provided that $\|S^n\|^{1/n}\rightarrow 0$ as $n\to \infty$.

\begin{theorem}\label{teo quasinil}
Let $X$ be a Banach space, $T$ be a linear operator on $X$ such that $T$ is almost reiteratively recurrent and  $\lambda\in\mathbb C$ with $|\lambda|=1$. Then either $T=\lambda Id_X$ or $T-\lambda Id_X$ is not quasinilpotent.
\end{theorem}

For the proof of Theorem \ref{teo quasinil} 
we need three preliminary results, the first 
is a lemma which was proved in \cite[Lemma 3.21]{CardeMur22}. Its proof is a modification of a result of Shkarin for frequently hypercyclic operators (see \cite{Shk09} or \cite[Lemma 9.38]{GroPer11}). 
\begin{lemma}\label{lema qNIL}
	Let $X$ be a  Banach space. Let $S$ be a linear operator on $X$, $x^*\in X^*$ and $U=\{y\in X: Re(\left\langle y,x^*\right\rangle)>0,Re(\left\langle S(y),x^*\right\rangle)<0\}$. If for some $x\in U$, \begin{equation*}\label{eq lema espectro SALAP}
	\underline{d}\big(N_S(x,U)\big)>0.    
	\end{equation*}
	Then $S-Id_X$ is not quasinilpotent.
	\end{lemma}

The other two preliminary results 
are immediate consequences of the more general Theorem \ref{lambda T CuSP} and Theorem \ref{reiterative recurrent implies freq T**}, which  will be proved in Section  \ref{section block families}. 
\begin{proposition}\label{lambda T reiterative}
    If $T$ is almost reiteratively recurrent then for any $|\lambda|=1$, $\lambda T$ is almost reiteratively recurrent.
\end{proposition}
In the following we will identify points of $X$ with their image in the bidual $X^{**}$ through the canonical inclusion $X\hookrightarrow X^{**}$.
\begin{theorem}\label{reiterative recurrent implies freq T** BD}
If $T$ is almost reiteratively recurrent then for each nonempty open set of the bidual $V\sub X^{**}$ such that $V\cap X\ne\emptyset$, there is $z\in V$ such that 
 $N_{T^{**}}(z,V)\in \underline {\mathcal{D}}$. 
\end{theorem}

We are now able to present the:
\begin{proof}[Proof of Theorem \ref{teo quasinil}]
Let $T$ be almost reiteratively recurrent and $|\lambda|=1$.
Suppose that $T\neq \lambda Id_X$. We have to prove that $T-\lambda Id_X$ is not quasinilpotent.
By Proposition \ref{lambda T reiterative} we may assume that $\lambda=1$. 
Using basic properties of adjoint operators, we have that $\|(T-Id_X)^n\|=
\|((T-Id_X)^n)^{**}\|=\|(T^{**} - Id_{X**})^n\|.$ As a consequence, $T -Id_X$ is quasinilpotent if and only if $T^{**}-Id_{X^{**}}$ is quasinilpotent. Thus for our purposes, it is sufficient to prove that $T^{**}-Id_{X^{**}}$ is not quasinilpotent.

Since $T\neq Id_X$, there is $x\in X$ such that $x\neq T(x)$ and hence by the Hahn-Banach Separation Theorem there is $x^*\in X^*$ such that $Re(\left\langle x,x^*\right\rangle)>0$ and $Re(\left\langle T^{**}(x),x^*\right\rangle)<0$.

 Consider $U:=\{y\in X^{**}: Re(\left\langle y,x^*\right\rangle)>0,Re(\left\langle T^{**}(y),x^*\right\rangle)<0\}$. Since $x\in X\cap U$, there is, by Theorem \ref{reiterative recurrent implies freq T** BD}, $x^{**}\in U$ such that $N_{T^{**}}(x^{**},U)$ has positive lower density. It follows by Lemma \ref{lema qNIL} that $T^{**}- Id_{X^{**}}$ is not quasinilpotent.
\end{proof}

We finish this section showing the main consequences of Theorem  \ref{teo quasinil}.
\begin{theorem}\label{P_bd perfecto}
Let $X$ be a complex Banach space. If $T$ is almost reiteratively recurrent then either $T$ has perfect spectrum (i.e. the spectrum doesn't have isolated points) or $T$ has a factor that is a unimodular multiple of the identity. 
\end{theorem}
 \begin{proof}
Suppose that $T$ is almost reiteratively recurrent and that $\sigma(T)$ (the spectrum of $T$) is not perfect. Then $\sigma(T)$ has an isolated point $\lambda$. By the Riesz Decomposition Theorem (see e.g. \cite[Theorem B.9]{GroPer11}) there are invariant closed subspaces $M_1,M_2$ such that $X=X_1\oplus X_2$ and $\sigma(T|_{M_1})= \sigma(T)\setminus \{\lambda\}$, $\sigma(T|_{M_2})= \{\lambda\}.$  Consider $\tilde T=T|_{M_2}:M_2\to M_2$. Then $\tilde T$ is a factor of $T$ with $J:X\to M_2$ the projection from $X$ onto $M_2$. Therefore $\tilde T$ is almost reiteratively recurrent.  
In particular,
$\tilde T$ is recurrent. It follows by \cite[Proposition 2.11]{CosManPar14} that $\lambda\in \zT$. Since $\sigma(\tilde T)=\{\lambda\}$, then $\sigma(\tilde T-\lambda Id_{M_2})=\{0\}$ and since we are on a complex Banach space, this is equivalent to  $\tilde T-\lambda Id_{M_2}$ being quasinilpotent.  Applying    Theorem \ref{teo quasinil}  we obtain that $\tilde T=\lambda Id_{M_2}$.
\end{proof}
Since hypercyclic operators cannot have multiples of the identity as factors (recall that every factor of a hypercyclic operator is also hypercyclic, see e.g. \cite[Proposition 2.24]{GroPer11}),
Theorem \ref{P_bd perfecto} applied to reiteratively hypercyclic operators gives the following.
\begin{corollary}
    \label{bd perfecto}
Reiteratively hypercyclic operators on complex Banach spaces  have perfect spectrum. 
\end{corollary}

From the above theorem we can deduce that no compact perturbation of the identity is reiteratively hypercyclic and that there are infinite dimensional and separable Banach spaces without reiteratively hypercyclic operators.
\begin{proof}[Proof of Theorem \ref{teo: perturbaciones identidad no reiter}]
If $X$ is a complex Banach space, then the result follows immediately from Corollary \ref{bd perfecto}.

For real Banach spaces, let us assume that $T=\lambda Id_X+K$, where $K$ is a compact operator and $T$ is reiteratively hypercyclic. 
Consider $X_\mathbb C$, the complexification of $X$. Thus, if $\tilde T$ and $\tilde K$ denote the complexificactions of $T$ and $K$ respectively, they satisfy that $\tilde T=Id_{X_\mathbb C}+\tilde K$ and that $\tilde K$ is a compact operator. Since $\tilde T$ is a factor of $T\oplus T$ (with $J$ given by the canonical  isomorphism between $X\oplus X$ and $X_\mathbb C$) and $T\oplus T$ is reiteratively hypercyclic (See \cite[Corollary 2.8]{ErnEssMen21} or Corollary \ref{cor: prod F-hiper}), it follows that $\tilde T$ is reiteratively hypercyclic, which is a contradiction.

If $X$ is a locally convex space and $T=\lambda Id_X+K$ with $K$ a compact operator, then there  exists a Banach space $Y$ and a linear operator $S:Y\to Y$, with $S=Id_Y+K'$ and $K'$ compact such that $S$ is a factor of $T$ (see for example the proof of \cite[Proposition 6.1]{MarPer02} or 
 \cite[Ex. 12.2.3]{GroPer11}). Thus $S$, and hence $T$, cannot be reiteratively hypercyclic. 
\end{proof}

\begin{proof}[Proof of Theorem  \ref{teo:argyros}]
Since every linear operator on the Argyros-Haydon space is a compact perturbation of a multiple of the identity 
 \cite{ArgHay11}) we have, by Theorem \ref{teo: perturbaciones identidad no reiter},  that no operator on it can be reiteratively hypercyclic.
\end{proof}

\section{Block families,  $\F$-hypercyclicity and almost $\mathcal {F}$-recurrence}\label{section block families}\label{section block}

In this section we study hypercyclicity and almost $\F$-recurrence for general families. We prove general results that have Proposition \ref{lambda T reiterative} and Theorem \ref{reiterative recurrent implies freq T** BD} as particular cases.
 
 \subsection{Preliminaries}
  A family of subsets of the natural numbers $\F\sub \mathcal P(\zN_0)$ is said to be \textit {hereditary upward}  provided that if $A\in \F$ and $A\sub B$, then $B$ belongs also to $\F$. Hereditary upward families are also sometimes called \textit{Furstenberg families}. Given an hereditary upward family, we say that an operator $T$ on a space $X$ is  \emph{$\F$-hypercyclic} if there is $x\in X$ for which the set $N_T(x,U):=\{n\in\zN_0: T^n(x)\in U\}$ of return times belong to $\F$, for every nonempty open set $U\subset X$.
 A vector $x\in X$ is  \emph{$\F$-recurrent} for $T$ if $N_T(x,V)\in\F$ for every neighborhood $V$ of $x$. An operator is called \emph{$\F$-recurrent} if the set of $\F$-recurrent vectors is dense. The set of return times $N_T(x,U)$ will be denoted by $N(x,U)$ when the operator is clear from the context.

 Thus, for example, if we denote by $\A_\infty$  the family of infinite sets, $\A_\infty$-hypercyclicity and $\F_\infty$-recurrence are simply hypercyclity and recurrence respectively. 
 
Let us recall some examples of families, which are the most widely studied in the literature. The families $\overline{\mathcal{BD}}$ and $\underline{\mathcal{D}}$ were already recalled in the previous section.
\begin{itemize}
\item $\overline{\mathcal{BD}}=\{$ sets with positive Banach upper density$\}$.
An operator is \emph{reiteratively hypercyclic} (resp. reiteratively recurrent) if and only if it is $ \overline{\mathcal{BD}}$- hypercyclic (resp. $ \overline{\mathcal{BD}}$-recurrent). 

\item $\underline{\mathcal{D}}=\{$ sets with positive lower density$\}$.
An operator is \emph{frequently hypercyclic} if and only if it is $ \underline{\mathcal{D}}$- hypercyclic. 

\item $\overline{\mathcal{D}}=\{$ sets with positive upper density$\}$, i.e. $A\in  \overline{\mathcal{D}}$ if $\overline {d} (A):=$ $\limsup_n  \f{\# (A\cap [0,n])}{n+1}>0$.     An operator is \emph{upper frequently hypercyclic} if and only if it is $ \overline{\mathcal{D}}$- hypercyclic.
If $\overline {d} (A)=\underline {d} (A)=d$ then $A$ is said to have density $d$. The family of sets with positive density is denoted by  $\mathcal{D}.$

\item
$\overline{\mathcal{LD}}= \{ \text{sets with positive upper logarithmic density}\}$, i.e $A\in  \overline{\mathcal{D}}$ if  $\overline \delta (A):=\limsup_N \f{1}{\log N} \sum_{1\leq j\leq N, j\in A} \f{1}{j}>0$. The families  $\underline{\mathcal{LD}}$ and ${\mathcal{LD}}$ of sets with positive lower logarithmic density and positive logarithmic density are defined analogously. 

Given a set of natural numbers $A$, the following relation holds (see e.g. \cite[Part III, Theorem 1.2]{Ten15}):
\begin{equation}\label{log density vs density}
\underline{Bd}(A)\leq  \underline{d}(A)\leq \underline {\delta}(A)\leq \overline {\delta}(A)\leq \overline {d} (A) \leq \overline {Bd} (A).
  \end{equation}

Thus, if a set has density, then it has logarithmic density.

\item $ Thick=\{ \text{ sets with arbitrarily long intervals }\}$. In
other words, a set $A$ is thick if for every $L\ge 1$, there exists $a \ge 0$ such that $[a, a + L] \subset A$. 
\item $\mathcal{S}yn=\{ \text{ Syndetic sets, i.e. they have bounded gaps }\}$. Equivalently, $A\in\mathcal Syn$ if there is a finite subset $F$ of $S$ such that $\bigcup_{n\in F} S-n\supseteq\zN$.
\item $\mathcal {PS}yn=\{$ Piecewise syndetic sets, i.e. there is $b$ such that $A$ supports arbitrarily long subsets with gaps bounded by $b$ $\}$, where a set  $A$ is piecewise syndetic if there exists
a thick set B and a syndetic set $C$ such that $A = B\cap C$. Equivalently, $A\in \mathcal{PS}yn$ if there is a finite set $F$ such that 
$$ \bigcup_{n\in F} A+n \text{ is thick.}$$

    \item $\kAP$=$\{$ sets with arbitrarily long arithmetic progressions with bounded step$\}$. That is, a set $A$ belongs to $\kAP$ if there is some $k>0$ such for any $m\in\mathbb N$ there exists an arithmetic progression of length $m+1$ and step $k\in\mathbb N$, i.e. for some $a\in\mathbb N$ such that $\{a,a+k,a+2k,\dots,a+mk\}\subset A$.

\end{itemize}

An hereditary upward family $\F$ is said to be \textit {shift invariant} (right-shift invariant or left-shift invariant) if it satisfies that $A\in\F$ if and only if $(A+k)\cap\mathbb N_0\in\F$ for any $k\in\mathbb Z$ ($k\in\mathbb N_0$ or $-k\in\mathbb N_0$) and it is said to be \textit{finitely invariant} provided that for every $A\in \F$ and every $k\in\mathbb N$, we have that $[k,\infty)\cap A\in \F$. All families mentioned above are shift invariant and finitely invariant.

An hereditary upward family is said to be \textit {upper} provided that $\emptyset\notin \A$ and $\A$ can be written as 
\begin{align}\label{upper}
\bigcup_{\delta\in D} \A_\delta,\quad\textrm{with }\; \A_\delta=  \bigcap_{m\in M} \A_{\delta,m},    
\end{align}
where $D$ is arbitrary but $M$ is countable and  such that the families $\A_{\delta,m} $ and $\A$ satisfy
\begin{itemize}
\item each $\A_{\delta,m}$ is \textit{finitely hereditary upward}, that means that for each $A\in \A_{\delta,m}$, there is a finite set $F$ such that $F\cap A\sub B,$ then $B\in \A_{\delta,m}$;
\item $\A$ is \textit{uniformly left invariant}, that is, if $A\in\A$ then there is $\delta\in D$ such that for every $n\in \mathbb N_0$, $A-n\in \A_\delta$. 
\end{itemize}

Let $\F =\bigcup_{\delta\in D} \F_\delta$ with $\F_\delta=\bigcap_{\mu\in M} \F_{\delta,\mu}$ be an upper Furstenberg family. Then it is called \emph{uniformly finitely invariant} if, for any $F\in \F$, there is some $\delta\in D$ such that, for all $n\geq 0$, $F\setminus[0,n]\in \F_\delta$.

All the upper families considered so far are uniformly finitely invariant, see \cite[Example 2.11] {BonGro18}.  
The next theorem provides some useful equivalences for $\F$-hypercyclicity for upper families. 
\begin{theorem}[Bonilla-Grosse Erdmann \cite{BonGro18}]\label{equivalencias upper}
Let $\A$ be an upper hereditary upward family and let $T$ be a linear operator on a separable Fr\'echet space. Then the following are equivalent:
\begin{enumerate}
\item $T$ is $\A$-hypercyclic.
    \item For any nonempty open set $V$ there is $\delta\in D$ such that for any nonempty open set $U$ there is $x\in U$ with $N_T(x,V)\in \A_\delta.$
    \item For any nonempty open set $V$ there is $\delta\in D$ such that for every $U$ and $m\in M$ there is $x\in U$  with $N_T(x,V)\in \A_{\delta,m}$.
    \item The set of $\A$-hypercyclic points is residual.
\end{enumerate}

\end{theorem}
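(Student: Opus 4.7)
The plan is to prove the cycle $(1)\Rightarrow(2)\Rightarrow(3)\Rightarrow(4)\Rightarrow(1)$; the substance lies in $(1)\Rightarrow(2)$ and $(3)\Rightarrow(4)$, while $(2)\Rightarrow(3)$ is immediate from $\A_\delta=\bigcap_{m\in M}\A_{\delta,m}$ and $(4)\Rightarrow(1)$ is trivial (an $\A$-hypercyclic point for $T$ is, in particular, a point witnessing $\A$-hypercyclicity of $T$).

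For $(1)\Rightarrow(2)$, I would fix an $\A$-hypercyclic vector $x_0$ and a nonempty open set $V$, so that $N(x_0,V)\in\A$. By uniform left invariance there is a single $\delta\in D$, depending on $x_0$ and $V$ but not on any shift, with $N(x_0,V)-n\in\A_\delta$ for every $n\in\zN$. Given any nonempty open $U$, since $\emptyset\notin\A$ the set $N(x_0,U)\in\A$ is nonempty, so one may pick $n_0$ with $T^{n_0}x_0\in U$; taking $x:=T^{n_0}x_0\in U$ yields $N(x,V)=N(x_0,V)-n_0\in\A_\delta$, as required.

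The crux is $(3)\Rightarrow(4)$, which I would handle by a Baire category argument. Fix a countable basis $(V_k)_{k\in\zN}$ of nonempty open sets of $X$ and, applying (3) to each $V_k$, choose $\delta_k\in D$ once and for all. For each pair $(k,m)\in\zN\times M$ (both countable!) consider
\[
B_{k,m}=\{x\in X: N(x,V_k)\in\A_{\delta_k,m}\}.
\]
Condition (3) says precisely that $B_{k,m}$ meets every nonempty open subset of $X$, so it is dense. To see it is open, take $x\in B_{k,m}$ and let $F\subseteq\zN$ be a finite set witnessing the finitely hereditary upward property of $\A_{\delta_k,m}$ at $A:=N(x,V_k)$, meaning that any $B\supseteq F\cap A$ lies in $\A_{\delta_k,m}$. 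Continuity of the finitely many iterates $T^n$, $n\in F$, guarantees that every $y$ close enough to $x$ satisfies $T^ny\in V_k$ for all $n\in F\cap A$, so $F\cap A\subseteq N(y,V_k)$ and hence $N(y,V_k)\in\A_{\delta_k,m}$. By Baire, $\bigcap_{k,m}B_{k,m}$ is a dense $G_\delta$, and every $x$ in it satisfies $N(x,V_k)\in\bigcap_m\A_{\delta_k,m}=\A_{\delta_k}\subseteq\A$ for each basis element $V_k$; since any nonempty open set contains some $V_k$ and $\A$ is hereditary upward, $x$ is $\A$-hypercyclic.

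The delicate point I anticipate as the main obstacle is the openness of $B_{k,m}$: the naive idea of preserving the full return set $N(x,V_k)$ under small perturbations is hopeless, and the finitely hereditary upward clause in the definition of upper families is designed precisely to reduce the matter to controlling the \emph{finite} subset $F\cap N(x,V_k)$. Equally important is the bookkeeping observation that $\delta_k$ depends on $V_k$ but not on $m$, which is what keeps the Baire intersection over $(k,m)$ countable even though the index set $D$ may be uncountable.
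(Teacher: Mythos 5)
Your proof is correct. The paper states this theorem without proof, quoting it from Bonilla--Grosse-Erdmann \cite{BonGro18}, and your argument is essentially the standard one behind that result: uniform left invariance gives $(1)\Rightarrow(2)$ by translating a single $\A$-hypercyclic orbit into $U$, and the finitely hereditary upward property of each $\A_{\delta_k,m}$ is precisely what makes the sets $B_{k,m}$ open, so the Baire argument for $(3)\Rightarrow(4)$ runs over the countable index set $(k,m)$ exactly as you describe.
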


 The families $\A_{\infty}$, $\overline{\mathcal D},\overline{\mathcal BD}$, $\overline{\mathcal{LD}}$ are upper while $ \underline{\mathcal{D}}$ is not upper (see \cite{BonGro18}).
The family $\underline{\mathcal{LD}}$ is not upper. This follows as the case $ \underline{\mathcal{D}}$: with essentially the same proof as \cite[Theorem 6.25]{BayMat09} it holds that if $T$ is an operator such that $T^n(x)\to 0$ for all $x$ in a dense set, then the set of $\underline{\mathcal{LD}}$-hypercyclic vectors is of first category. Then, if we take for example $T=2B$ on $\ell_2$, $T$ is frequently hypercyclic and thus $\underline{\mathcal{LD}}$-hypercyclic, and clearly $T^n(x)\to 0$ for all $x$ in a dense set. Therefore the set of $\underline{\mathcal{LD}}$-hypercyclic vectors of $T$ is not residual. Theorem \ref{equivalencias upper}  thus implies that the family  $\underline{\mathcal{LD}}$ is not upper.

For basic concepts in linear dynamics we refer to the books \cite{BayMat09,GroPer11}. For more on $\A$-hypercyclicity and recurrence see \cite{Bes16,BonGro18, BonGroLopPer22JFA, CardeMur22,ErnEssMen21,GriLop23,grivaux2022questions,lopez2022recurrentarxiv,Pui17,Pui18,Shk09}).

\subsection{Almost $\mathcal {F}$-recurrence and block families}

We define now a weak notion of $\mathcal {F}$-recurrence. 
\begin{definition}\rm\label{definition P_F}
Let $T$ be a linear operator and let $\F$ be an hereditary upward family. We will say that $T$  is \emph{almost} $\F$-\emph{recurrent} provided that for every nonempty open set $U$, there is $x\in U$ such that $N(x,U)\in \F$.
\end{definition}
Of course,   $\F$-\emph{recurrence} implies almost $\F$-\emph{recurrence}, we don't know if the converse is true in general, see Section \ref{final comments}.

For the family $\F_\infty$ of infinite sets almost $\mathcal {F}_\infty$-recurrence is exactly the notion of recurrence as defined classically (see e.g. \cite{Fur81}) and it is equivalent to $\F_\infty$-recurrence  \cite[Proposition 2.1]{CosManPar14}.
For the family $\underline{ \mathcal D}$ of sets with positive lower density, almost $\underline{ \mathcal D}$-recurrence was considered in \cite[Section 2.5]{GriMat14}. 

Almost $\mathcal F$-recurrence is slightly stronger than the $\PP_\F$ property as defined in \cite{Pui18}, and both concepts coincide for  shift invariant families. Indeed, the only difference is that for almost $\mathcal F$-recurrence we impose the vector $x$ to be in the open set $U$.  For the family of sets with positive upper density this property was also considered, with different names, in \cite{CosPar12} and  \cite[Proposition 4.6]{badea2007unimodular}.

We will be specially interested in almost $\F$-recurrence for a special class of families called block families (see  \cite{glasner2004classifying}, see also \cite{huang2012family,Li11}). 

\begin{definition}
Given an hereditary upward family $\F$ we define the block family $b\F$ in the following way: $S \in b\F$ if and only if there is $F\in \F$ such that for every finite subset $R$ of $F$ there is $n\in\zN_0$ such that $R+n\sub S$.
\end{definition}

\begin{proposition}[Basic properties]
Let $\F,\tilde \F$ be hereditary upward families, then 
\begin{enumerate}
    \item $\F\sub b\F$;
    \item $bb\F=b\F$;
    \item If $\F\sub \tilde \F$ then $b\F\sub b\tilde \F$;
    \end{enumerate}
\end{proposition}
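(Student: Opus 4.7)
The plan is to prove each of the four items directly from the definition of $b\F$, in each case either exhibiting an explicit witness set $F$ or transferring a witness from one side to the other.

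For (1), given $S \in \F$, I would take $F := S$ itself as the witness. Then for any finite $R \subseteq F = S$, the choice $n = 0$ trivially gives $R + n = R \subseteq S$, so $S \in b\F$. For (3), if $S \in b\F$ with witness $F \in \F$, then the hypothesis $\F \subseteq \tilde{\F}$ gives $F \in \tilde{\F}$, and the same witness $F$ shows that $S \in b\tilde{\F}$.

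For (4), start with $S \in b\F$ and a witness $F \in \F$. For any finite $R \subseteq F$ the witness produces some $m \in \zN$ with $R + m \subseteq S$, and shifting both sides by $n$ gives $R + (m+n) \subseteq S + n$. Thus the same set $F$ also witnesses $S + n \in b\F$, which (under the natural convention identifying $b\F + n$ with the translates of elements of $b\F$) is exactly the inclusion $b\F \subseteq b\F + n$.

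For (2), one inclusion is essentially a free consequence of (1): applying (1) with $b\F$ in place of $\F$ gives $b\F \subseteq b(b\F) = bb\F$. The content is the reverse inclusion. Given $S \in bb\F$, I would unpack two layers of witness: there exists $F \in b\F$ such that every finite $R \subseteq F$ admits $n \in \zN$ with $R + n \subseteq S$; and there exists $F' \in \F$ such that every finite $R' \subseteq F'$ admits $n' \in \zN$ with $R' + n' \subseteq F$. The claim is then that $F'$ witnesses $S \in b\F$. Given a finite $R' \subseteq F'$, apply the second witness to get $n'$ with $R' + n' \subseteq F$; then $R' + n'$ is itself a finite subset of $F$, so the first witness yields $n$ with $(R' + n') + n \subseteq S$, i.e., $R' + (n'+n) \subseteq S$, as required.

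The only step that is not entirely mechanical is the shift-composition argument in (2); the mild care needed is just to notice that $R' + n'$ is still a finite set, so it is a legitimate input to the outer witness. Items (1), (3), and (4) each reduce to a single line once the definition is unwound.
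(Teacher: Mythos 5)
Your proof is correct and follows essentially the same route as the paper's: (1) and (3) are immediate from the definition, (2) is proved by composing the two layers of witnesses exactly as in the paper's argument, and (4) is the same one-line shift of the witness $k$ to $k+n$. The only caveat concerns (4): what you verify (and what the paper's own terse proof also verifies) is that $S\in b\F$ implies $S+n\in b\F$, which under the literal reading $b\F+n=\{A+n:A\in b\F\}$ is the containment $b\F+n\sub b\F$ rather than the displayed one — but this is an ambiguity in the statement's notation rather than a gap in your reasoning.
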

\begin{proof}
(1) and (3) are immediate. Proof of (2). By (1) we have that $b\F\sub bb\F$ so that one inclusion holds.

Let $S\in bbF$, hence there is $F_b\in b\F$ such that for every finite subset $R_b$ of $F_b$ there is $n$ such that $R_b+n\sub S$. Also, since $F_b\in b\F$, there is $F\in \F$ such that for every finite subset $R$ of $F$ there is $n$ such that $R+n\sub F_b$. 
Therefore, if $R$ is a finite subset of $F$ and $R+n\sub F_b$, there is $n'$ such that $R+n+n'\sub S$, which proves that $S\in b\F$.

 \end{proof}
We provide next some examples of block families.
\begin{example}\label{ejemplos block families}
\begin{enumerate}
    \item [a)] Let $\F_{\infty}$ and $\mathcal {C}ofin$, where   $\mathcal {C}ofin$ denotes the family of cofinite subsets of $\mathbb N_0$.  Then, $b\F_{\infty}=\F_{\infty}$ and $b\mathcal{C}ofin=Thick$. 
    In particular, $bThick=Thick$.
    \item  [b)]$b\mathcal D$=$b\underline {\mathcal {D}}=b\overline{\mathcal D} =\overline{\mathcal{BD}}$. In particular, $b\overline{\mathcal{BD}}$=$bb\overline{\mathcal D}=b\overline{\mathcal D} =\overline{\mathcal{BD}}$.
    \begin{proof}
    One inclusion is Ellis'
    Theorem \cite[Theorem 3.20]{Fur81}, which says precisely that if $A\in \mathcal{BD}$, then there is $F$ of positive density such that for every finite subset $R$ of $F$ there is $n\in \mathbb N_0$ with $R+n\sub A$. Thus, $\overline{\mathcal {BD}}\sub b\mathcal D$. 
    
    For the converse,  let $A\in b\overline{ \mathcal {D}}$. Hence there is $F\in b\overline{ \mathcal {D}}$ such that for every finite subset $R$ of $F$ there is $n\in\zN_0$ such that $R+n\sub A$. As $F$ has positive upper density, there are $\delta>0$ and $(n_k)_k$ such that   $\f{\#(F\cap[0,n_k])}{n_k+1}>\delta$ for every $k\in \mathbb N$. 
    
    Let $R_k=[0, n_k]\cap F\sub F$, hence $\f{\# (R_k\cap[0,n_k])}{n_k+1}>\delta$ and there is $m_k$ such that $R_k+m_k\sub A$. It follows that $\f{\#(A\cap [m_k,m_k+n_k])}{n_k+1}\geq \f{\#(R_k\cap [0,n_k])}{n_k+1}>\delta$. We conclude that $A$ has positive upper Banach density. Thus, $b\mathcal D\sub b\underline{ \mathcal{D}}\sub b\overline{\mathcal {D}}\sub \overline{\mathcal {BD}}$.
    \end{proof}
    \item [c)] $b\mathcal {LD}=b\underline{\mathcal { LD}}=b\overline{\mathcal { LD}}=\overline{\mathcal {BD}}.$
    
    \begin{proof}
    Let $A\in \overline {\mathcal {BD}}$. By $b)$, there is a set $R$ with positive density such that for every finite subset $F$ of $R$ there is $n$ such that $F+n\sub A$. By \eqref{log density vs density}, we have that $R$ has positive logarithmic density. This implies that $A\in {b\mathcal {LD}}.$ Conversely, if $A\in b\overline{\mathcal {LD}}$ then, since $\overline{\mathcal {LD}}\sub \overline{\mathcal D}$, we have that $A\in b\overline{\mathcal D}=\overline{\mathcal {BD}}.$ This proves our claim since   $\mathcal {LD}\sub\underline{\mathcal { LD}}\sub\overline{\mathcal { LD}}$.
    \end{proof}
    
     \item[d)] $b\mathcal Syn=\mathcal {PS}yn$. In particular, $b\mathcal {PS}yn=bb\mathcal Syn=b\mathcal{S}yn=\mathcal {PS}yn$.
     \begin{proof}
     Let $A\in b\mathcal Syn$. Then there is $F\in\mathcal Syn$ such that for every finite set $R\sub F$ there is $n\in\zN$ such that $R+n\sub A$. Let $b$ a bound for the gaps of $F$ and consider for each $k$, a finite set $R_k\sub F$ such that $|R_k|=k$ and $n_k$ such that $R_k+n_k\sub A$. Hence, $\bigcup_{i\in [0,b]} R_k+n_k+i\sub \bigcup_{i\in [0,b]} A+i$ contains an interval of length at least $k$. This implies that $\bigcup_{i\in [0,b]} A+i$ is thick. Equivalently, $A\in \PS$.

     Let $A\in \mathcal {PS}yn$. Then there are $b>0,(n_m)_m$ and $(a_{m,k})_{m,1\leq k\leq m}$ such that
     \begin{enumerate}
         \item [i)] for every $m$ and $1\leq k\leq m$, $n_m+a_{m,1}+\ldots +a_{m,k}\in A$ and
         \item [ii)] for every $m$ and $k\leq m$, $a_{m,k}\in [1,b]$.
     \end{enumerate}
     The sequence $(a_{m,1})_m$ is infinite and belongs to $[1,b]$. Hence, there is $f_1\in [1,b]$ such that $a_{m,1}=f_1$ for infinitely many $m's$. Consider the subsequences of $(n_m)_m$ and $(a_{m,k})_m$, which we keep notating $(n_m)_m$ and $(a_{m,k})_m$ respectively.
     
     Thus,
     \begin{enumerate}
        \item [i)]for every $m$, $n_m+f_1\in A$;
        
         \item [ii)] for every $m$ and $2\leq k\leq m$, $n_m+f_1+ a_{m,2}+\dots+ a_{m,k}\in A$;
         \item [iii)] $f_1\in [1,b]$ and
         \item[iv)] for every $m$ and $2\leq k\leq m$, $a_{m,k}\in [1,b]$.
     \end{enumerate}
     Proceeding by induction we construct for each $m$,  $f_m\in [1,b]$  such that for every $k\le m$,
     $$n_m+f_1+\ldots +f_k\in A.$$
     Let $F=\{\sum_{i=1}^m f_i: m\in \zN\}$. Thus $F$ is syndetic and for every finite set $R\sub F$ there is $n$ such that $R+n\sub A$.
    \end{proof}
     \item [e)] $b\mathcal{IAP}=\kAP$, where $\mathcal{IAP}$ is the family of subsets of the natural numbers containing an infinite arithmetic progression.
     \begin{proof}
          A set $A\in\kAP$ contains arbitrarily long arithmetic progressions of step $k$, for some $k.$ Thus it contains any finite subset of an infinite arithmetic progression of step $k.$ This means that $A\in b\mathcal{IAP}$. Conversely, a set containing (shifts of) any finite subset of an infinite arithmetic progression clearly belongs to $\kAP.$
    \end{proof}
\end{enumerate} 
\end{example}

In \cite[Theorem 8.4]{BonGroLopPer22JFA} it was proved that for  right-shift invariant upper families, an operator is $\F$-hypercyclic if and only if it is hypercyclic and has a residual set of $\F$-recurrent vectors. The next result shows, in particular, that for any block family, the residuality of $\F$-recurrent vectors is not needed. For the family of sets with positive upper Banach density the following proposition also  answers \cite[Question 23]{Pui18}.
\begin{proposition}\label{equivalencia bF hiper}
Let $T$ be an operator and let $\F$ be a family. 
\begin{enumerate}
    \item Let $\F$ be a block family. If $T$ is hypercyclic and almost $\F$-recurrent then $T$ is $\F$-hypercyclic.
    \item Let $\F$ be a left-shift invariant family. If $T$ is $\F$-hypercyclic then $T$ is $\F$-recurrent and hence almost $\F$-recurrent. 
\end{enumerate}
In particular for a left-shift invariant and block family $\F$, we have that an operator is $\F$-hypercyclic if and only if it is almost $\F$-recurrent and hypercyclic.
\end{proposition}
\begin{proof}
$(1)$ Suppose that $T$ is hypercyclic and almost $\F$-recurrent. Let $x$ be a hypercyclic vector and $U$ be a nonempty open set. Let $y\in U$ such that $A=N(y,U)\in \F$.

Let $R$ be a finite subset of $A$. Thus, $W=\bigcap_{r\in R} T^{-r}(U)$ is an open set containing $y$. Since $x$ is a hypercyclic vector there is $n\ge 0$ such that $T^n(x)\in W$ and hence $R+n\sub N(x,U)$. Thus, $N(x,U)\in b\F=\F$ and $x$ is an $\F$-hypercyclic vector.

$(2)$ Let $T$ be $\F$-hypercyclic. Let $x\in X$ be an $\F$-hypercyclic vector and $U$ be a nonempty open set. Let $n\in N(x,U)$ then $T^n(x)$ is an $\F$-recurrent vector in $U$. Indeed, for any open neighbourhood $V$ of $T^n(x)$ we have  $N(T^n(x),V)=N(x,V)-n\in\F$ because $\F$ is left-shift invariant.
\end{proof}
\begin{corollary}\label{F recurrence implica bF hiper}
Let $T$ be an operator and let $\F$ be an hereditary upward family. If $T$ is hypercyclic and almost $\F$-recurrent, then $T$ is $b\F$-hypercyclic.

\end{corollary}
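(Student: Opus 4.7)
The plan is to deduce this corollary directly from the previous Proposition \ref{equivalencia bF hiper} applied to the family $b\F$ instead of $\F$. To do so I need two ingredients: that $b\F$ is itself a block family, and that the $\PP_\F$ property transfers up to the $\PP_{b\F}$ property.

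First I would invoke part (2) of the Basic properties proposition, which states $bb\F=b\F$. By the definition of a block family (those $\mathcal G$ with $b\mathcal G=\mathcal G$), this says precisely that $b\F$ is a block family, so Proposition \ref{equivalencia bF hiper} is applicable with $b\F$ in place of $\F$.

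Next I would use part (1) of the Basic properties, namely $\F\subseteq b\F$. Given any nonempty open set $U$, the $\PP_\F$ hypothesis yields $x\in X$ with $N(x,U)\in \F\subseteq b\F$, so $T$ has the $\PP_{b\F}$ property.

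Finally, having verified that $T$ is hypercyclic and has the $\PP_{b\F}$ property for the block family $b\F$, Proposition \ref{equivalencia bF hiper} delivers that $T$ is $b\F$-hypercyclic, which is exactly the statement to be proved. There is no genuine obstacle here; the corollary is essentially a formal consequence of the preceding proposition and the idempotence of the block operation.
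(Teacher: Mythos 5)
Your proof is correct and matches the paper's intent: the corollary is stated without proof as an immediate consequence of Proposition \ref{equivalencia bF hiper}, and your deduction via the idempotence $bb\F=b\F$ (so $b\F$ is a block family) together with $\F\sub b\F$ (so $\PP_\F$ implies $\PP_{b\F}$) is exactly the formal justification. One could equivalently note that the forward argument in the proof of that proposition already produces $N(x,U)\in b\F$ for an arbitrary family $\F$, but this is the same idea.
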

We will see in Theorem \ref{PBF implica PF} that the converse holds for adjoint operators.

 In the next result we focus on the $n$-fold direct sum linear dynamical
system associated to an operator.
\begin{proposition}\label{suma directa P_F}
Let $\F$ be an hereditary upward family and
let $T$ be an operator such that its commutant contains a universal vector, i.e. there is some $v\in X$ such that the set $\{Sv\,:\, ST=TS, \, S:X\to X\, \textrm{continuous mapping}\}$ is dense in $X$.
Then we have the following.
\begin{enumerate}
    \item [a)] If $T$  is almost $\F$-recurrent then $\bigoplus_{i=1}^n T$ is almost $\F$-recurrent.
    \item [b)] If $T$ is  $\F$-recurrent then $\bigoplus_{i=1}^n T$ is $\F$-recurrent.
    \end{enumerate}
    
    In particular, the above statements hold if $T$ is cyclic or if it commutes with a cyclic operator.
\end{proposition}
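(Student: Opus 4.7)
The plan is to transport the $\PP_\F$ property, $\F$-recurrence, and $\F$-hypercyclicity from $T$ to its $n$-fold direct sum by exploiting the density of the commutant orbit of $x$. Given a basic open set $U_1\times\cdots\times U_n\subseteq X^n$, my first move would be to pick operators $S_1,\ldots,S_n$ commuting with $T$ such that $S_ix\in U_i$ for each $i$; then use the continuity of the finitely many $S_i$ to produce a single neighborhood $V$ of $x$ satisfying $S_i(V)\subseteq U_i$ for every $i$. After that, the commutativity $T^kS_i=S_iT^k$ lets me convert any vector $y$ with good return times to $V$ under $T$ into the tuple $(S_1y,\ldots,S_ny)$ with equally good return times to $U_1\times\cdots\times U_n$ under $\bigoplus_{i=1}^n T$.

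For part (a), I would choose $y$ via the $\PP_\F$ hypothesis so that $A:=N(y,V)\in\F$. For each $k\in A$ and each $i$, $T^k(S_iy)=S_iT^ky\in S_i(V)\subseteq U_i$, so $A$ is contained in the return-time set of $(S_1y,\ldots,S_ny)$ to $U_1\times\cdots\times U_n$ under $\bigoplus_{i=1}^n T$, which therefore belongs to $\F$ by hereditary upward closure. For part (b), the construction is identical, except that I would pick $y$ to be an $\F$-recurrent vector of $T$ lying inside $V$ (such vectors form a dense set by hypothesis), so that $(S_1y,\ldots,S_ny)\in U_1\times\cdots\times U_n$. To verify $\F$-recurrence of this tuple, I would revisit the same argument for arbitrary subneighborhoods $W_1\times\cdots\times W_n$ of $(S_1y,\ldots,S_ny)$, pulling them back to a neighborhood $\tilde V$ of $y$ via continuity of the $S_i$ and using $\F$-recurrence of $y$ to get $N(y,\tilde V)\in\F$.

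For part (c), I would read the hypothesis as also implicitly including $\PP_\F$ (or $\F$-hypercyclicity) for $T$; without such a companion hypothesis weak mixing alone does not seem sufficient. Under this reading, weak mixing of $T$ gives hypercyclicity of $\bigoplus_{i=1}^n T$ (topological weak mixing of all orders), while part (a) supplies the $\PP_\F$ property for the direct sum; since $\F$ is a block family, Proposition \ref{equivalencia bF hiper} then delivers $\F$-hypercyclicity. Finally, for the \emph{``in particular''} clause: if $T$ is cyclic with cyclic vector $x$, then $\{p(T)x:p\in\C[X]\}$ is dense and every $p(T)$ commutes with $T$, so $x$ is a universal vector for the commutant; and if $T$ merely commutes with a cyclic operator $R$, every polynomial in $R$ still commutes with $T$, so a cyclic vector for $R$ plays the role of $x$.

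The main obstacle I anticipate is the apparent gap in the hypothesis of (c); modulo reading it as ``$T$ is weakly mixing and has the $\PP_\F$ property'', every step reduces to a routine continuity/density manipulation of the same shape as part (a). The only mildly substantive point is the uniform-neighborhood step, where continuity of the finitely many $S_i$ is used to shrink $V$ so that $S_i(V)\subseteq U_i$ simultaneously for all $i$.
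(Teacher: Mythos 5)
Your proposal is correct and follows essentially the same route as the paper: choose $S_1,\dots,S_n$ in the commutant with $S_ix\in U_i$, shrink to a common neighborhood $W$ of $x$ with $S_i(W)\subseteq U_i$, and transport return times via $T^kS_i=S_iT^k$, then deduce (c) from (a) together with Proposition \ref{equivalencia bF hiper}. Your observation that the stated hypothesis of (c) must implicitly include the $\PP_\F$ property for $T$ is well taken -- the paper's own proof of (c) invokes part (a) and therefore tacitly assumes it as well.
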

\begin{remark}
 We note that for  $\F=\F_\infty,$ the family of infinite subsets, this result was proved in \cite[Theorem 9.1]{CosManPar14}   with the additional hypothesis that the set of all vectors $v\in X$ such that  
 $$\overline{\{Sv\,:\, ST=TS, \, S\in\mathcal L(X)\}}=X$$ is residual. See also \cite[Proposition 5.10]{grivaux2022questions}.
 \end{remark}

\begin{proof}
Proof of $a)$. Let $V_1\ldots V_n$ be nonempty open sets and let $v\in X$ be a universal vector for the commutant of $T$. Hence, there are continuous mappings $S_i$, with $1\leq i\leq n$, that commute with $T$ such that $S_i v \in  V_i$, for every $1 \leq  i \leq n$. Thus, $W := \bigcap_{i=1}^n S_i^{-1} V_i$ an
open set containing $v$. Let $x\in W$ be such that $N_T(x,W)\in \F$. If $k\in N_T(x,W)$ then $T^kS_i(x)=S_iT^k(x)\in S_i(W)\sub V_i$ for every $1\leq i\leq n$.
Therefore, $N_T(x,W)\sub \bigcap_{i=1}^n N_T(S_i(x),V_i)$, which implies that 
$N_{\bigoplus_{i=1}^n T}\left(\oplus_{i=1}^n S_i(x),\oplus_{i=1}^nV_i\right)\in \F$.

Proof of $b)$. Let $V_1\ldots V_n$ be nonempty open sets and let $v\in X$ be an universal vector for the commutant. Hence, there are continuous mappings $S_i$ that commute with $T$ and a nonempty open set $W$ containing $v$ such that $S_i(W)\sub V_i$ for every $1\leq i\leq n$. If $x\in W$, then $\oplus_{i=1}^n S_i(x)\in \bigoplus_{i=1}^n V_i$. 

We show that if $x$ is an $\F$-recurrent vector for $T$, then $\oplus_{
i=1}^n S_i(x)$ is
an $F$-recurrent vector for $\oplus_{
i=1}^n T$. Let $U_i$ be open neighbourhoods of $ S_i(x),$ $1\leq i \leq n$. We must show that $\bigcap_{i=1}^n N_T(S_i(x),U_i)\in \F$.
We may assume that $U_i\sub V_i$ for every $1\leq i \leq n.$ Then since $S_i(x)\in S_i(W)\cap U_i,$ there are non-empty open sets $W_i$ such that $x\in W_i\sub W$ and  $S_i(W_i)\sub  S_i(W)\cap U_i$ for every $1\leq i \leq n$. 
Let $\tilde W:=\bigcap_{i=1}^n W_i$. Then 
$S_i(\tilde W)\sub U_i$ for every $1\leq i \leq n$. Since each $S_i$ with $1 \leq i \leq n$ commutes with T, we have that  $N_T(x,\tilde W)\sub \bigcap_{i=1}^n N_T(S_i(x),U_i)$. Finally, noting that $N_T(x,\tilde W)\in \F$, we conclude that 
$\bigcap_{i=1}^n N_T(S_i(x),U_i)\in \F$.

For the last assertion let $R$ be a cyclic  operator with cyclic vector $v$ (i.e. the set $\{p(R)v:\, p\textrm{ polynomial}\}$ is dense in $X$). 
If  $R$  commutes with $T$, then $p(R)T=Tp(R)$ for every polynomial $p$. Then, we have $\{Sv\,:\, ST=TS, \, S:X\to X\, \textrm{continuous mapping}\}\supseteq\{p(R)v\,:\, p\textrm{ polynomial}\}$ which   is dense in $X$ 
\end{proof}
Recall that an operator is said to be weakly mixing provided that $T\oplus T$ is hypercyclic. Equivalently, $\oplus_{i=1}^n T$ is hypercyclic for every $n$ \cite[Theorem 1.51]{GroPer11}.
\begin{corollary}\label{cor: prod F-hiper}
    Let $\F$ be a block family and let $T$ be a weakly mixing and almost $\F$-recurrent operator. Then  $\bigoplus_{i=1}^n T$ is $\F$-hypercyclic for every $n\in\mathbb N$.
\end{corollary}

\begin{proof}
If $T$ is weakly mixing, then it is hypercyclic.
 In particular, $T$ is cyclic. If \emph{T} is almost \emph{F} -recurrent, then by Proposition
 \ref{suma directa P_F} $a)$ we have that $\bigoplus_{i=1}^n T$ is almost $\F$-recurrent. Since $\bigoplus_{i=1}^n T$ is hypercyclic and $\F$ is a block family, it follows by Proposition \ref{equivalencia bF hiper} that  $\bigoplus_{i=1}^n T$ is $\F$-hypercyclic. 
\end{proof}
\begin{remark}
It was shown in \cite[Corollary 2.8]{ErnEssMen21} that an operator $T$ is reiteratively hypercyclic then so is $\bigoplus_{i=1}^n T$  for every $n\in\mathbb N.$
The conclusion of the above corollary in the case $\F=\overline{\mathcal{BD}}$ provides an alternative proof of this fact. Indeed, by Example \ref{ejemplos block families}, $\overline{\mathcal{BD}}$ is a block family. Moreover if $T$ is reiteratively hypercyclic then, by
Proposition \ref{equivalencia bF hiper},  $T$ is 
almost $\overline{\mathcal{BD}}$-recurrent and, by \cite[Proposition 25]{Bes16}, $T$ is weakly mixing.
 \end{remark}
 
The following is the main result of this section. It establishes the converse of Corollary \ref{F recurrence implica bF hiper} for adjoint operators.
\begin{theorem}\label{PBF implica PF}
Let $T$ be an adjoint operator defined on a dual Banach space and let $\F$ be a left-shift invariant  hereditary upward family. If $T$ is almost $b\F$-recurrent then $T$ is almost $\F$-recurrent.
\end{theorem}
\begin{proof}
Let \emph{X} be a dual Banach space and let
\emph{U} be a nonempty open subset of \emph{X}. Let $V\sub \overline V\sub U$ be an auxiliary ball, so that by Alaoglu's Theorem  $\overline V$ is weakly$^*$-compact and convex \cite[Theorem 3.1, Chapter V]{Conway90}. Hence, there is $x\in V$ such that $N(x,V)\in b\F$. By the definition of block family, there is $R\in \F$ such that for every finite subset $F$ of $R$ there is $n\in \mathbb N_0$ such that $F+n\sub N(x,V)$. Let $r_0=\min R$.

Let $R_n=R\cap [0,n]$ and $a_n$ be such that $T^{a_n+r}(x)\in V$ for every $r\in R_n$ and $n\geq 1$. Since $\overline V$ is weakly$^*$-compact  there is a vector $y\in \overline V$ which is a weak$^*$-limit point  of the set $\{T^{a_{n}+r_0}(x)\}$. We claim that $R-r_0\sub N(y,U)$. Indeed, let $r\in R$ and note that $r\in R_n$ (and hence $T^{a_{n}+r}(x)\in V$) for every $n\ge r$. Since $y$ is a weak$^*$-limit point  of the set $\{T^{a_{n}+r_0}(x):\, n\ge r\}$ and $T$ is weak$^*$-weak$^*$-continuous, we have that
 $T^{r-r_0}(y)$ is a weak$^*$-limit point  of the set $T^{r-r_0}\left(\{T^{a_{n}+r_0}(x):\, n\ge r\}\right)=\{T^{a_{n}+r}(x):\, n\ge r\}\subset V$. This implies that $T^{r-r_0}(y)\in \overline V\subset U$ and thus $R-r_0\sub N(y,U)$.
 By left-shift invariance we obtain that $N(y,U)\in \F.$
\end{proof}
The above result is no longer true for non-adjoint operators (see Corollary \ref{coro: existe Reiter hyp no upper recurrent}).

Theorem \ref{PBF implica PF}, together with Corollary \ref{F recurrence implica bF hiper}, gives us the following.
\begin{corollary}\label{P_F sii bF hiper}
Let $T$ be an adjoint operator defined on a separable dual Banach space and let $\F$ be a left-shift invariant hereditary upward family. Then $T$ is hypercyclic and almost $\F$-recurrent  if and only if $T$ is $b\F$-hypercyclic.
\end{corollary}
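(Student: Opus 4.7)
The plan is to combine the two results that immediately precede the statement: Corollary \ref{F recurrence implica bF hiper} gives one implication, and Theorem \ref{PBF implica PF} gives the other (together with a trivial observation about $b\F$-hypercyclicity). No new ingredient is needed, so the proof will be a short composition of the two facts.

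For the ``only if'' direction, I would assume $T$ is hypercyclic and has the $\PP_\F$ property, and then directly invoke Corollary \ref{F recurrence implica bF hiper} to conclude that $T$ is $b\F$-hypercyclic. Note that this direction does not use the $w^*$-$w^*$ continuity hypothesis at all; it holds for arbitrary operators.

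For the ``if'' direction, I would start from a $b\F$-hypercyclic vector $x$. First, $b\F$-hypercyclicity of $x$ trivially implies hypercyclicity of $T$: since $\emptyset\notin\F$ and hence $\emptyset\notin b\F$, the set $N(x,U)$ is nonempty for every nonempty open $U$, so the orbit of $x$ is dense. Second, the very existence of $x$ with $N(x,U)\in b\F$ for every nonempty open $U$ gives $T$ the $\PP_{b\F}$ property (in fact a stronger universal version of it). Hence by Theorem \ref{PBF implica PF}, which is where the $w^*$-$w^*$ continuity is used, $T$ has the $\PP_\F$ property.

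Since both directions are routine citations of results already proved in the paper, there is really no obstacle to overcome here; the only thing to be mindful of is to record explicitly that $b\F$-hypercyclicity of a single vector yields both hypercyclicity and the $\PP_{b\F}$ property of $T$, so that Theorem \ref{PBF implica PF} can be applied. The corollary will therefore be a two- or three-line proof.
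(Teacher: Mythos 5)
Your proposal is correct and matches the paper exactly: the authors also obtain this corollary by combining Theorem \ref{PBF implica PF} with Corollary \ref{F recurrence implica bF hiper}, leaving the routine observations (that a $b\F$-hypercyclic operator is hypercyclic and has the $\PP_{b\F}$ property) implicit. Your explicit recording of those observations is a harmless elaboration of the same argument.
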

Recall that a vector $x$ is said to be periodic for an operator $T$ provided that there is $n\in \mathbb N$ such that $T^n(x)=x$.
\begin{corollary}\label{coro to PBF implica PF}
Let $T$ be an adjoint operator defined on a  dual Banach space $X$. Then,
\begin{enumerate}
    \item $T$ is almost ${\mathcal D} $-recurrent  if and only if it is almost $\overline{\mathcal {BD}}$-recurrent;
    \item $T$ is almost ${\mathcal Syn}$-recurrent if and only if it is almost $\mathcal {PS}yn$-recurrent;
    \item $T$ has a dense set of periodic vectors  if and only if it is almost ${\kAP}$-recurrent. 
\end{enumerate}
\end{corollary}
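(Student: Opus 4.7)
The plan is to handle the three equivalences uniformly by invoking Theorem \ref{PBF implica PF} and the block-family identifications carried out in the examples. One direction is immediate in every case: from the trivial inclusions $\mathcal D\subseteq\overline{\mathcal{BD}}$ and $\mathcal Syn\subseteq\mathcal{PS}yn$ I get $\PP_{\mathcal D}\Rightarrow\PP_{\overline{\mathcal{BD}}}$ and $\PP_{\mathcal Syn}\Rightarrow\PP_{\mathcal{PS}yn}$, while any periodic vector $x$ of period $n$ satisfies $N(x,U)\supseteq n\mathbb N\in\mathcal{IAP}\subseteq\kAP$ for every neighbourhood $U$ of $x$, so dense periodic vectors force the $\PP_{\kAP}$ property.

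For the reverse directions of (1) and (2), I would just feed Example b) ($b\mathcal D=\overline{\mathcal{BD}}$) and Example d) ($b\mathcal Syn=\mathcal{PS}yn$) into Theorem \ref{PBF implica PF}: the hypothesis $\PP_{\overline{\mathcal{BD}}}$ is literally $\PP_{b\mathcal D}$, so Theorem \ref{PBF implica PF} yields $\PP_{\mathcal D}$, and analogously $\PP_{\mathcal{PS}yn}=\PP_{b\mathcal Syn}\Rightarrow\PP_{\mathcal Syn}$. Both parts become one-liners once the block-family computations are available.

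The only substantive point is proving $\PP_{\kAP}\Rightarrow$ dense periodic vectors in (3). By Example e) we have $b\mathcal{IAP}=\kAP$, so Theorem \ref{PBF implica PF} first reduces the hypothesis to the $\PP_{\mathcal{IAP}}$ property. Given any nonempty open set $U$, I would fix an auxiliary norm ball $V$ with $\overline V\subseteq U$, whose closure is convex and $w^\ast$-compact by Banach--Alaoglu. Applying $\PP_{\mathcal{IAP}}$ to $V$ yields $x\in X$ and $a,d\in\mathbb N$ with $\{a+kd:k\geq 0\}\subseteq N(x,V)$; setting $y=T^a x$ places the $T^d$-orbit $\{T^{kd}y:k\geq 0\}$ inside $\overline V$. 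The $w^\ast$-closed convex hull $K$ of this orbit is then a nonempty $w^\ast$-compact convex subset of $\overline V$ that is $T^d$-invariant, since $T^d$ is $w^\ast$-$w^\ast$ continuous and linear. The Markov--Kakutani fixed point theorem, applied to the single affine continuous map $T^d\colon K\to K$ (or equivalently, Ces\`aro averaging of an orbit), furnishes $p\in K\subseteq U$ with $T^d p=p$, producing a periodic vector in $U$ and hence density of periodic vectors. The step I expect to be the main obstacle is precisely this passage from an infinite arithmetic progression in $N(x,V)$ to a genuine periodic vector inside $U$: Theorem \ref{PBF implica PF} does not produce such a point by itself, and closing the gap requires the $w^\ast$-compact convex-hull construction together with a fixed-point theorem, with the invariance of $K$ under $T^d$ being the place where $w^\ast$-$w^\ast$ continuity is used in an essential way.
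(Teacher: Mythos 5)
Your proof is correct and follows essentially the same route as the paper: parts (1) and (2) are read off from Theorem \ref{PBF implica PF} via the block-family identities $b\mathcal D=\overline{\mathcal{BD}}$ and $b\mathcal Syn=\mathcal{PS}yn$, and part (3) reduces via $b\mathcal{IAP}=\kAP$ to extracting a periodic point from a $T^d$-invariant $w^*$-compact convex hull of an orbit by a fixed-point theorem (the paper invokes Schauder--Tychonoff where you use Markov--Kakutani, an immaterial difference). If anything, your version is slightly more careful than the paper's in shifting by $T^a$ to land the whole $T^d$-orbit in $\overline V$ and in taking the $w^*$-closed (rather than norm-closed) convex hull to guarantee $w^*$-compactness.
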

\begin{proof}
We only have to prove (3), since (1) and (2) follow by  Theorem \ref{PBF implica PF} and Example \ref{ejemplos block families} $b)$ and $d)$, respectively.

By  Theorem \ref{PBF implica PF} and Example \ref{ejemplos block families} $e)$ \emph{T} is almost ${\mathcal I\AP}$ -recurrent
if and only if it is $\mathcal{AP}_b$-recurrent. Then, it suffices to
show that if $T$ is almost  ${\mathcal I\AP}$-recurrent then $T$ has a dense set of periodic vectors. Given a nonempty open set $U$ there exists a nonempty open, bounded and convex set $V$ with $\overline V$ weak$^*$ compact satisfying $V\sub \overline V\sub U$ \cite[Theorem 3.1, chapter V]{Conway90}. Since $T$ is almost ${\mathcal I\AP}$-recurrent there are $x\in V$ and $k,n\in\mathbb N$ such that $T^{n+jk}(x)\sub V$ for every $j\ge 0$. 
In other words, $\{ (T^k)^jT^nx : j \ge 0\} := Orb_{T^k} (T^nx) \subset V $. Let $Y :=\overline{co}(Orb_{T^k}(T^nx))\sub \overline V\sub U$ (here, $\overline{co}$ denotes the closed convex hull).
 Note that  $Y$ is weakly$^*$-compact because it is a closed     subset of the weakly$^*$-compact $\overline V$. Thus, since   
$T^kOrb_{T^k}(T^nx)\sub Orb_{T^k}(T^nx)$, we have that $Y$ is a $T^k$-invariant, convex and weakly$^*$-compact set. An application of the  Schauder-Tychonoff Theorem (see \cite{Tyc35}) implies that there is a fixed point $y$ of $T^k$ in $Y$, i.e. $T^ky=y\in U.$ 
\end{proof}

The following result is similar to  Theorem \ref{PBF implica PF}.  We prove it here for the sake of  completeness. Note that since the family $\underline{\mathcal{D}}$ is left-shift invariant (see e.g. \cite{BonGro18}) Theorem \ref{reiterative recurrent implies freq T** BD} follows as an immediate corollary.  We will denote by $j_X$ to the canonical inclusion $X\hookrightarrow X^{**}.$
\begin{theorem}\label{reiterative recurrent implies freq T**}
Let $T$ be a linear operator on a Banach space $X$ and let $\F$ be a left-shift invariant and hereditary upward family. If $T$ is almost  ${b\F}$-recurrent then for each nonempty open set $U\sub X^{**}$ such that $U\cap j_X(X)\ne\emptyset$, there is $z\in U$ such that 
 $N_{T^{**}}(z,U)\in \F$. 
\end{theorem}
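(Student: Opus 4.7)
The plan is to adapt the proof of Theorem \ref{PBF implica PF}: replace the dual-space setting there by the canonical embedding $X\hookrightarrow X^{**}$, and use that the second adjoint $T^{**}$ is automatically $w^*$-$w^*$ continuous.

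First I localize $V$ to a convenient bounded auxiliary set. Pick $x_0\in V\cap X$ and $\epsilon>0$ so that the closed norm ball $\overline{B}_{X^{**}}(x_0,\epsilon)$ is contained in $V$, and put $V_0:=B_X(x_0,\epsilon)$, a nonempty open ball in $X$. By Goldstine's theorem the $w^*$-closure of $V_0$ in $X^{**}$ equals $\overline{B}_{X^{**}}(x_0,\epsilon)$, which is $w^*$-compact by Banach--Alaoglu and is contained in $V$. Applying the $\PP_{b\F}$ hypothesis to $V_0$ I get $x\in X$ with $N_T(x,V_0)\in b\F$, hence some $R\in\F$ such that for every finite $F\subseteq R$ there is $n\in\zN$ with $F+n\subseteq N_T(x,V_0)$. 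Setting $R_n:=R\cap[0,n]$ and choosing $a_n$ with $T^{a_n+r}(x)\in V_0$ for all $r\in R_n$, exactly as in the proof of Theorem \ref{PBF implica PF} the orbit points $T^{a_n}(x)$ lie in the $w^*$-compact ball $\overline{B}_{X^{**}}(x_0,\epsilon)$, so some subnet $(T^{a_{n_\alpha}}(x))_\alpha$ admits a $w^*$-limit $z\in\overline{B}_{X^{**}}(x_0,\epsilon)\subseteq V$.

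The central computation is then: for every $r\in R$, by $w^*$-$w^*$ continuity of $(T^{**})^r$ and the fact that $T^{**}$ extends $T$,
$$(T^{**})^r(z)=w^*\text{-}\lim_{\alpha}(T^{**})^r\bigl(T^{a_{n_\alpha}}(x)\bigr)=w^*\text{-}\lim_{\alpha}T^{r+a_{n_\alpha}}(x),$$
and once $r\le n_\alpha$ (i.e.\ $r\in R_{n_\alpha}$) the terms $T^{r+a_{n_\alpha}}(x)$ lie in $V_0\subseteq\overline{B}_{X^{**}}(x_0,\epsilon)\subseteq V$. Hence $R\subseteq N_{T^{**}}(z,V)$, and since $\F$ is hereditary upward with $R\in\F$, we conclude that $N_{T^{**}}(z,V)\in\F$.

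The main obstacle is really the topological compatibility: one needs an auxiliary set inside $V$ whose $w^*$-closure in $X^{**}$ still lies in the norm-open set $V$. Goldstine's theorem applied to an open ball in $X$ supplies exactly this, and thereafter the $w^*$-$w^*$ continuity of $T^{**}$ (which holds for every second adjoint, with no separability assumption needed) lets the $w^*$-limit argument from Theorem \ref{PBF implica PF} run verbatim, modulo passing from subsequences to subnets in the possibly non-metrizable $w^*$-compact ball.
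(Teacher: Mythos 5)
Your argument is essentially the paper's own proof: the paper likewise replaces $V$ by an open ball $U\sub V\cap X$ whose closure, viewed as a closed ball of $X^{**}$, is $w^*$-compact and contained in $V$, applies the $\PP_{b\F}$ property to $U$ to get $x$ and $R\in\F$, extracts a $w^*$-limit point $z$ of the points $T^{a_n}(x)$, and verifies $R\sub N_{T^{**}}(z,V)$ via the $w^*$-$w^*$ continuity of $T^{**}$ exactly as you do. If anything, your version is slightly more careful than the paper's: the paper asserts that the ball is $w^*$-\emph{metrizable} (which would require $X^*$ separable) and extracts a subsequence, whereas your passage to subnets, justified by Banach--Alaoglu and Goldstine, runs without any extra hypothesis.
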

\begin{proof}
Let $U\sub X^{**}$ be a nonempty open set such that $U\cap j_X(X)\neq \emptyset.$ Consider $V$ a nonempty open ball of $X$ such that $j_X(V)\sub U$ and whose weak$^*$ closure, $\overline{j_X(V)}^{w*}\sub U$. 
Note that by Goldstine's Theorem (see e.g. \cite{Conway90}), $\overline{j_X(V)}^{w*}$ is a closed ball of $X^{**}$. Therefore, $\overline{j_X(V)}^{w*}$ is a weakly$^*$-compact set \cite{Conway90}.

Let $x\in V$ be such that $N_T(x,V)\in b\F$ and $R\in \F$ be such that for every finite subset $F$ of $R$ there is $n\in\mathbb N_0$ such that $F+n\subset N_T(x,V)$.   Let $R_n=R\cap [0,n]$ and $a_n$ such that $R_n+a_n\subset N_T(x,V)$ for every $n\ge1$. Let $r_0=\min R$. 

 Since $\overline{j_X(V)}^{w*}\sub X^{**}$ is weakly$^*$-compact  there is a vector $z\in \overline{j_X(V)}^{w*}$ which is a weak$^*$-limit point of the set $j_X(\{T^{a_{n}+r_0}(x):n\mathbb N\})$. We claim that $R-r_0\sub N_{T^{**}}(z,U)$. Indeed, let $r\in R$ and note that $r\in R_n$ (and hence $T^{a_{n}+r}(x)\in V$) for every $n\ge r$. Since $z$ is a weak$^*$-limit point  of the set $j_X(\{T^{a_{n}+r_0}(x):\, n\ge r\})$ and $T^{**}$ is weak$^*$-weak$^*$-continuous, we have that
 $(T^{**})^{r-r_0}(z)$ is a weak$^*$-limit point  of the set $(T^{**})^{r-r_0}\left(j_X(\{T^{a_{n}+r_0}(x):\, n\ge r\})\right)=j_X(\{T^{a_{n}+r}(x):\, n\ge r\})\subset j_X(V)$. This implies that $(T^{**})^{r-r_0}(z)\in \overline{j_X(V)}^{w*}\subset U$ and thus $R-r_0\sub N_{T^{**}}(z,U)$.
 By left-shift invariance we obtain that $N_{T^{**}}(z,U)\in \F.$
\end{proof}

Recall that an hereditary upward family $\mathcal F$ is said to have the CuSP property provided that for any finite covering $I_1\ldots I_q$ of $\zN$ and any numbers $n_1,\ldots, n_q$ in $\mathbb N_0$, then
$$A\in \mathcal F \Rightarrow
\Big(\bigcup_{j=1}^q n_j+ \left(A\cap I_j\right)\Big)\in \mathcal F.$$

Families with the CuSP property were introduced in \cite[Section 4]{BonGroLopPer22JFA}, but the concept has its origins in \cite[Section 6.3.3]{BayMat09}. 
It is known that the families $\underline{\mathcal{D}}$, $\overline{\mathcal{D}}$, $\overline{\mathcal{BD}}$, $\mathcal Syn$ and $\mathcal F_\infty$  all have the CuSP property (see \cite[Lemma 4.1]{BonGroLopPer22JFA}), while the families $\AP_b$ and $\mathcal{IAP}$ don't (see \cite[Remark 3.23]{CardeMur22}). Note that the following result has  Proposition \ref{lambda T reiterative} as a  particular case.
\begin{theorem}\label{lambda T CuSP}
Let $\F$ be an hereditary upward family with the CuSP property. If $T$ is almost $\F$-recurrent then, for any $\lambda\in \mathbb K$ with $|\lambda|=1$, $\lambda T$ is almost $\F$-recurrent.
\end{theorem}
\begin{proof}
Let $\lambda\in \mathbb K$ with $|\lambda|=1$.
 Given a recurrent vector $y$ and an open set $U$ containing $y$, we define
$$\Lambda(y,U)=\{\mu\in\mathbb T\,:\, \exists\ n\in\mathbb N \text { such that } T^n(y)\in U \text{ and } \mu=\lambda^n\}.$$

We also consider $\Lambda(y)$ the intersection of the closures of $\Lambda(y,U)$, i.e., $$\Lambda(y)=\bigcap_{ 
 U \text{ open, } y\in U} \overline {\Lambda (y,U)}.$$

Note that $\Lambda(y)\neq \emptyset$. Indeed, since the $\overline {\Lambda (y,U)}$ are all compact, it suffices to show that the collection  $\{\overline{\Lambda(y,U)}:U \text{ is an open set containing } y\}$ has the finite intersection property.
Let $U_1,\ldots U_N$ be open sets containing $y$. Let $U'=\bigcap_{j=1}^N U_j$, which is nonempty because $y\in U'$. Since $y$ is recurrent, it follows that there is $n\in \zN$ such that $T^{n}(y)\in U'\subseteq U_j$ for every $1\leq j\leq N$. This shows that $\lambda^n\in \bigcap_{j=1}^N \Lambda (y,U_j)\subseteq \bigcap_{j=1}^N \overline{\Lambda (y,U_j)}$.

Proceeding as in \cite[Theorem 4.4]{BonGroLopPer22JFA}, it follows that 
 $\Lambda(y)$ is a multiplicative semigroup of the unit circle.

Indeed, let $\mu_1,\mu_2\in \Lambda(y)$. Let $U_1$ be an open set containing $y$ and let $\varepsilon>0$. Since $\mu_1\in\overline{\Lambda(y,U_1)},$
there is $n_1\in\zN$ such that $|\lambda^{n_1}-\mu_1|<\varepsilon$ and such that $T^{n_{1}}(y)\in U_1$. Let $U_2$ be an open set containing $y$ such that $T^{n_1}(U_2)\sub U_1$. Since $\mu_2\in \Lambda(y)\sub \overline {\Lambda(y,U_2)}$, there is $n_{2}\in\zN$ such that $T^{n_{2}}(y)\in  U_2$ and $|\lambda^{n_2}-\mu_2|<\varepsilon$. Altogether we find that $T^{n_{1}+n_{2}}(y)\in T^{n_{1}}(U_2)\subset U_1$. Hence, $\lambda^{n_{1}}\lambda^{n_2}\in \Lambda (y,U_1)$. Finally, 
$$|\mu_1\mu_2-\lambda^{n_1}\lambda^{n_2}|\le |\mu_1(\mu_2-\lambda^{n_2})|+|(\mu_1-\lambda^{n_1})\lambda^{n_2}|<2\varepsilon.$$
This implies that $\mu_1\mu_2 \in \overline{\Lambda(y,U_1)}$ for every open set $U_1$ containing $y$. Therefore $\mu_1\mu_2\in\Lambda(y).$

It is known that any closed multiplicative subsemigroup of $\mathbb T$ is either $\zT$ or $G_n$ for some $n\in\mathbb N$ (the group of $n$-th roots of the unity), see for example \cite[p.170]{GroPer11}. Thus, for each $y,$ we have $\Lambda(y)=\zT$ or $\Lambda(y)=G_n$ for some $n\in\mathbb N$.
 Note that for real Banach spaces we are only concerned with semigroups of $\mathbb T\cap \mathbb R$, thus the only possibilities are $\Lambda(y)=G_1$ or $\Lambda(y)=G_2.$
 
Recall that since $T$ is almost $\F$-recurrent, it is recurrent \cite[Proposition 2.1]{CosManPar14}.
Let  $U$ be a nonempty open set and consider an open ball $U'$ and $\varepsilon>0$ such that $B(1,\varepsilon)U'\sub U,$ where $B(1,\varepsilon)$ denotes the open disc of center 1 and radius $\varepsilon.$ We will show that there is $x\in U'$ such that $N_{\lambda T}(x,U)\in \mathcal F.$

We will divide the proof in three cases: 

\textbf{Case 1: There is a recurrent vector $y\in U'$ such that $\Lambda(y)=\zT.$}

This case can be proved as in \cite[Theorem 4.4]{BonGroLopPer22JFA}.
Let $y\in U'$ be a recurrent vector such that $\Lambda(y)=\zT.$ In particular, we have that $\Lambda (y,V)$ is dense in $\zT$, for every open set $V$ containing $y$.

Thus, for every $\mu\in \zT$  there is  $n\in \zN$ such that $T^n(y)\in U'$ and such that $|\lambda^n-\mu|<\varepsilon$. By the compactness of $\mathbb T$,  there are $N\in \zN$ and natural numbers $n_j$,  with $1\leq j\leq N$, such that $T^{n_j}(y)\in U' $ for every $1\leq j\leq N$ and such that 
\begin{equation}\label{compactness}
    \zT\sub \bigcup_{j=1}^N B(\lambda^{n_j},\varepsilon).
\end{equation}
Let $W$ be an open set containing $y$ such that $T^{n_j}(W)\sub U'$ for every $1\leq j\leq N$. Let $x\in W$ be such that $A:=N_T(x,W)\in\F$. Note that
by \eqref{compactness}, the sets
$I_j=\{n: \lambda^{n+n_j}\in B(1,\varepsilon)\}$, with $1\leq j\leq N$, form a cover of $\zN$. Indeed, if $n\in \mathbb N$ there is $1\leq j_0\leq N$ such that $\lambda^{-n}\in B(\lambda^{n_{j_0}},\varepsilon).$ Thus, $|\lambda^{n+n_{j_0}}-1|\leq |\lambda^{n}||\lambda^{n_{j_0}}-\lambda^{-n}|<\varepsilon,$ which implies that $n\in I_{j_0}.$

If $n\in A\cap I_j$ we have that
\begin{align*}
    \lambda^{n+n_j} T^{n+n_j}(x)\in B(1,\varepsilon) T^{n_j}(W)\sub B(1,\varepsilon) U' \sub U.
\end{align*}
We have proved that $\bigcup_{j=1}^N n_j+(I_j\cap N_T(x,W))\subset N_{\lambda T}(x,U)$.
It follows by the CuSP property that $N_{\lambda T}(x,U)\in\F$. This proves the first case.

The case when $\Lambda(y)\ne \mathbb T$ is rather more involved than the $\F$-recurrence analogue in \cite[Theorem 4.4]{BonGroLopPer22JFA}, and we must consider two different situations. For a finite group $G$, let $\textrm{\bf{ord}}(G)$ denote its cardinality.

\textbf{Case 2: $\sup\{ \textrm{ord}(\Lambda(y)):y\in U'\}=\infty$ and $\Lambda(y)\neq \zT$ for every recurrent vector $y\in U'$.}

Let $n_0$ be such that
\begin{equation}\label{cubrimiento}
    \zT\sub \bigcup_{j=0}^{n-1} B(e^{2\pi i \frac{j}{n}},\frac{\varepsilon}{2})
\end{equation}
for every $n\geq n_0$.

By assumption, there are a recurrent vector $y\in U'$ and $N\geq n_0$ such that $\Lambda(y)=G_N$. Since each $e^{2\pi i\frac{j}{N}}\in\Lambda(y)$, there is for each $0\leq j\leq N-1$, an $n_j\in \mathbb N$ such that $T^{n_j}(y)\in U'$ and such that $|\lambda^{n_j}-e^{2\pi i\frac{j}{N}}|<\frac{\varepsilon}{2}.$

It follows by \eqref{cubrimiento} that for every $\mu\in \zT$, there is $0\leq j\leq N-1$ such that $|\mu-\lambda^{n_j}|<\varepsilon$ and such that $T^{n_j}(y)\in U'.$ So, we obtain \eqref{compactness} and conclude as we did in
the first case.

\textbf{ Case 3: }$\sup \{\textbf{\textrm{ord}}(\Lambda(y)): y\in U',\, y\textrm{ recurrent}\}<\infty.$

We first show that there exists an open set $U''$  such that $U''\subseteq U'$ with the property that there are $G_{r_1},\ldots, G_{r_N}$ satisfying that for all $n\in\zN$: 
\begin{equation}
\label{eq:U''}
\text{the set }\{y\in U'':y\textrm{ recurrent and }\Lambda(y)=G_{n}\}\text{ is} 
\left\{\begin{array}{ll}
\text{dense in }U'',    & \text{  if }n\in\{r_1,\dots,r_N\},  \\
  \text{empty},   &  \text{  if }n\notin\{r_1,\dots,r_N\}.
\end{array}
\right.
\end{equation}

Indeed, since $\sup \{\textrm{\bf{ord}}(\Lambda(y)):y\in U',\, y\textrm{ recurrent}\}<\infty$ and $\Lambda(y)\ne\emptyset$ for each recurrent vector $y$, there are  $G_{s_1},\ldots G_{s_K}$ such that for every recurrent vector $y\in U'$ there is $1\leq k\leq K$ such that $\Lambda(y)= G_{s_k}$. For each $1\leq k\leq K$, consider $D_{k,U'}=\{y\in U':y\textrm{ recurrent and }\Lambda(y)=G_{s_k}\}$.
Since $\bigcup_{1\leq k\leq K} D_{k,U'}=\{ \text {recurrent vectors of $T$ in } U'\}$ and the closure of the set of recurrent vectors for $T$ contains
 $U'$, there must be some $1\leq k_0\leq K$ such that $\overline{D_{{k_0},U'}}$ has nonempty interior. 
To simplify notation, we suppose that $k_0=1$. Let $U_1$ be the interior of $\overline{D_{{1},U'}}$. Note that $U_1\sub U'$.

If $D_{2,U_1}:=\{y\in U_1:y\textrm{ recurrent and } \Lambda(y)=G_{s_2}\}$ is dense in $U_1$, we set $U_2=U_1$. Otherwise, consider a nonempty open set $U_2\subset U_1$ such that $U_2\cap \overline{D_{2,U_1}}=\emptyset$.
Inductively, if $D_{j,U_{j-1}}:=\{y\in U_{j-1}: y\textrm{ recurrent and }\Lambda(y)=G_{s_j}\}$ is dense in $U_{j-1}$, we set $U_j=U_{j-1}$. Otherwise, consider a nonempty open set $U_j\subset U_{j-1}$ such that $U_j\cap \overline{D_{j,U_{j-1}}}=\emptyset$.

We have constructed nonempty 
open sets $U_1\supseteq U_2\supseteq \dots\supseteq U_K$ such that  for each $1\le j\le K,$ the sets $D_{j,U_K}:=\{y\in U_K: y\textrm{ recurrent and }\Lambda(y)=G_{s_j}\}$ are either dense in $U_K$ or empty. Then if  $G_{r_1},\dots,G_{r_N}\in\{G_{s_1},\dots,G_{s_K}\}$ are the groups $G_{r_j}$ such that $\{y\in U_K: y\textrm{ recurrent and }\Lambda(y)=G_{r_j}\}$ is  dense in $U_K$,  the set $U''=U_K$ satisfies \eqref{eq:U''}.

Given a nonempty open set $W$, consider
$$\beta(W):=\{\mu\in\mathbb T: \exists n\in\mathbb N \text{ and a nonempty open set } V\sub W \text{ such that } T^n(V)\subseteq W, \mu=\lambda^n\}.$$

Note that 
\begin{equation}\label{eq: beta es la union de los lambda}
  \beta(W)=\bigcup_{ y\in Rec(W)} \Lambda(y,W),
\end{equation}
where $Rec(W)$ denotes the set of recurrent vectors in $W$.
 Indeed, if $y$ is a recurrent vector in $W$ and $T^n(y)\in W$, then there is an open set $V$ around $y$ such that $T^n(V)\subseteq W$. This implies that $\bigcup_{ y\in Rec(W)} \Lambda(y,W) \subseteq \beta(W)$.

Conversely, if $\mu\in \beta(W)$, then there is $n\in\mathbb N $ and a nonempty open set  $V\subseteq W$ such that $T^n(V)\subseteq W$ and such that $\mu=\lambda^n$. Let $y\in V$, be a recurrent vector. Hence, $T^n(y)\in W$ and $\mu=\lambda^n$. We conclude that $\mu\in \Lambda(y,W).$ Hence \eqref{eq: beta es la union de los lambda} follows.

Let $U'',G_{r_1},\dots,G_{r_N},$ satisfying \eqref{eq:U''} and define
$$\beta=\bigcap_{W\subseteq U''}\overline{\beta(W)}.$$ 
\textbf{Claim} $\beta=\bigcup_{k=1}^N G_{r_k}$. 

Indeed, let $\mu \in \beta$. Then, there are a sequence natural numbers $(n_j)_{j\ge1}$ and a decreasing chain of nonempty open sets $V_{n_j}\subseteq \overline {V_{n_j}}\subseteq V_{n_{j-1}}$ (which can be taken to be open balls whose diameter goes to 0) such that $T^{n_j}(V_{n_j})\subseteq V_{n_{j-1}} $, $\lambda^{n_j}\in\beta(V_{n_{j-1}})$ and $\lambda^{n_j}\to \mu$. Since  $(\overline{V_{n_j}})_{j\ge1}$ have diameter decreasing to 0, it follows, by the well known Cantor’s intersection theorem for complete metric spaces, that there is $y\in U''$ such that $\cap_{j\ge1} \overline{V_{n_j}}=\{y\}$. Since $(V_{n_{j}})_{j\ge1}$ is a neighbourhood basis of $y$ and $T^{n_j}(y)\in V_{n_{j-1}}$ for every $j\in \mathbb N$, we have that $y$ is a  recurrent vector. This implies that $\mu \in \overline{\bigcap_{V \text{ open }, V\ni y} \Lambda (y,V)}\subseteq\Lambda(y)=G_{r_k}$ for some $1\leq k\leq N$.

Conversely, let $1\leq k\leq N$ and let $W\subseteq U''$ be a nonempty open set. By \eqref{eq:U''}, there is a recurrent vector $y\in W$ such that $\Lambda(y)=G_{r_k}$. Therefore, $G_{r_k}\subseteq \overline{\Lambda(y,W)}\subseteq \overline{\beta(W)}.$ This implies that $G_{r_k}\subseteq \beta.$

We will show now that there is $W'\subseteq U''$ such that 
\begin{equation}
  \label{cubrimiento beta}  
\beta(W')\subseteq \bigcup_{1\leq k\leq N, 0\leq j\leq r_{k}-1} B(e^{2\pi i\frac{j}{r_k}},\frac{\varepsilon}{2})= 
\bigcup_{s\in G_{r_k}, 1\leq k\leq N} B(s,\frac{\varepsilon}{2}).
\end{equation}
Suppose otherwise, then 
for each nonempty open set $W\subseteq U''$, there is some $n_W$ such that $\lambda^{n_W}\in\beta(W)\setminus \bigcup_{s\in G_{r_k}, 1\leq k\leq N} B(s,\frac{\varepsilon}{2}),$ which implies that there is some open set $V_W\sub W$ such that $T^{n_W}(V_W)\sub W$ and $|\lambda^{n_W}-s|\ge\frac{\varepsilon}{2}$ for every ${s\in G_{r_k}, 1\leq k\leq N}$. Hence, we can construct inductively a sequence of
natural numbers $(n_j)_{j\ge 1}$ and a decreasing chain of nonempty open sets $(V_{n_j})_{j\ge 1}\subset U''$ whose diameter tend to 0, with $\overline{V_{n_j}}\subseteq V_{n_{j-1}}$ such that $T^{n_j}(V_{n_j})\subseteq V_{n_{j-1}}$, and such that  $|\lambda^{n_j}-s|\ge\frac{\varepsilon}{2}$ for every $s\in \bigcup_{k=1}^N G_{r_k}$. Moreover, passing to a subsequence of $(\lambda^{n_j})_{j\ge1}$, we may assume that there is $\mu\in\zT$ such that $\lambda^{n_{j}}\to \mu$. 

Consider $\{y\}=\bigcap \overline{V_{n_j}}$. Since for every $j\ge 1$, $T^{n_j}y\in T^{n_j}V_{n_j}\sub V_{n_{j-1}}\sub\dots \sub V_{n_{1}}$, we have that $y$ is a recurrent vector and that $\lambda^{n_{j}}\in \Lambda(y,V_{n_{l}})$ for every $l$ such that $1\le l<j$. Thus $\mu\in \overline{\Lambda(y,V_{n_{l}})}$ for every $l\in \zN$ and hence $\mu\in \Lambda(y)$ (because for all open sets $U$ containing $y$ there exists $l\ge 1$ such
that $V_{n_l} \sub U$). 
Since $\Lambda(y)=G_{r_{k_0}}$ for some $1\leq k_0\leq N$, we have that $\mu\in \bigcup_{k=1}^N G_{r_k}$. This  is a contradiction because we assumed that $|\lambda^{n_{j}}-s|\ge\frac{\varepsilon}{2}$ for every $s\in \bigcup_{k=1}^N G_{r_k}.$

So, let $W'\sub U''$ be an open set satisfying \eqref{cubrimiento beta}.

Let $y_1\in W'$ be a recurrent vector such that $\Lambda(y_1)=G_{r_1}$. Therefore, by the definition of $\Lambda(y_1)$,  there is  for each $0\leq j\leq r_1-1$ a natural number $n_{1,j}$ such that $T^{n_{1,j}}(y_1)\in W'$ and $|\lambda^{n_{1,j}}-e^{-2\pi i \frac{j}{r_1}}|<\frac{\varepsilon}{2}.$
For each $0\leq j\leq r_1-1$, let $W_{1,j}$ be an open set  that contains $y_1$ such that $T^{n_{1,j}}(W_{1,j})\subseteq W'$ and let $W_1:=\cap_{j=0}^{n_1} W_{1,j}\subset W'$, which is nonempty because $y\in W_{1,j}$ for every $0\leq j\leq r_1-1$.

Take now $y_2\in W_1$ be a recurrent vector such that $\Lambda(y_2)=G_{r_2}$. Proceeding as before we obtain an open set $W_2\sub W_1$  and  natural numbers $n_{2,j}$ for $0\leq j\leq r_2-1$, such that $T^{n_{2,j}}(W_{2})\subseteq W_1$ and $|\lambda^{n_{2,j}}-e^{-2\pi i \frac{j}{r_2}}|<\frac{\varepsilon}{2}.$

Repeating the argument for each $G_{r_k}$, $1\le k\le N$, we construct a nonempty open set $W'':=W_N\subset W'$ and natural numbers ${n_{k,j}}$ with $1\leq k\leq N$ and $0\leq j\leq r_k-1$ such that 
\begin{equation}\label{W''}
T^{n_{k,j}}(W'')\subseteq W' \text{ and } |\lambda^{n_{k,j}}-e^{-2\pi i \frac{j}{r_k}}|<\frac{\varepsilon}{2}
\end{equation}
 for every $1\leq k\leq N,$ $ 0\leq j\leq r_k-1.$    

Since $T$ is almost $\F$-recurrent there is some  $x\in W''$ 
such that $A:= N_T(x,W'')\in\mathcal F$. We will prove that $N_{\lambda T}(x,U)\in \F$.

For each $1\leq k\leq N$ and $0\leq j\leq r_k-1$ consider 
$$I_{k,j} = \{n\in\zN : \lambda^{n+n_{k,j}}\in  B(1, \varepsilon)\}.$$

Let $n\in A$. By continuity, there is an open set $V\subseteq W''$ around $x$ such that $T^n(V)\subseteq W''$. Thus, $$\lambda^n\in \beta(W'')\subseteq  \beta(W')\subseteq \bigcup_{1\leq k\leq N, 0\leq j\leq r_{k}-1} B(e^{2\pi i\frac{j}{r_k}},\frac{\varepsilon}{2}),$$
where the last inclusion follows by \eqref{cubrimiento beta}.

Thus, there is $1\leq k_0\leq N$ and $0\leq j_0\leq r_{k_0}-1$ such that
$$|\lambda^{n}-e^{2\pi i \frac{j_0}{r_{k_0}}}|<\frac{\varepsilon}{2}.$$
From this inequality and  \eqref{W''} we     conclude that
\begin{equation}\label{1+epsilon}
|\lambda^{n+n_{k_0,j_0}}-1|=|\lambda^{-n}-\lambda^{n_{k_0,j_0}}|\leq 
    |\lambda^{-n}-e^{-2\pi i \frac{j_0}{r_{k_0}}}|+|e^{-2\pi i \frac{j_0}{r_{k_0}}}-\lambda^{n_{k_0,j_0}}|<\varepsilon.
\end{equation}
This means that $n\in I_{k_0,j_0}.$
Therefore,  $A\subset \bigcup_{1\leq k\leq N,0\leq j\leq r_k-1} I_{k,j}$ and hence the sets $I_{k,j}$ with ${1\leq k\leq N}$, ${0\leq j\leq r_k-1}$  together with $\mathbb N\setminus A$ form a finite covering of $\mathbb N.$

Finally, if $n\in A\cap I_{k,j}$ we have by \eqref{W''} and \eqref{1+epsilon} that
$$    \lambda^{n+n_{k,j}} T^{n+n_{k,j}}(x)\in B(1,\varepsilon) T^{n_{k,j}}(W'')\subseteq B(1,\varepsilon) W' \subseteq U,
$$
i.e. $n+n_{k,j}\in N_{\lambda T}(x,U)$.

We have proved that 
\begin{align*}
  \big((\mathbb N\setminus A)\cap N_T(x,W'')\big)  \cup\Bigg(&{\bigcup_{1\leq k\le N,0\leq j\leq r_k-1}  n_{k,j}+(I_{k,j}\cap N_T(x,W''))}\Bigg)\\
  & =\bigcup_{1\leq k\le  N,0\leq j\leq r_k-1} n_{k,j}+(I_{k,j}\cap A)\\ 
  & \subset\quad N_{\lambda T}(x,U).
\end{align*}
It follows by the CuSP property that $N_{\lambda T}(x,U)\in\mathcal F$.
\end{proof}

Consider $\delta>0$
and define $\overline{\mathcal{BD}}_\delta = \{A \subseteq\mathbb N : \overline{\mathcal{BD}}(A) \ge\delta\}$. It is known that no Banach space supports a $\overline{\mathcal{BD}}_\delta$-hypercyclic operator, as shown in \cite[Section 3]{Bes16}. However, if $T^k$ is
the identity operator with $k\le\delta^{-1}$, then $T$ is
$\overline{\mathcal{BD}}_\delta$-recurrent. Specifically, for any open set
$U$ containing $x$, we have $k\mathbb N_0\subseteq  N_T(x,U)$, implying that $N_T(x,U)\in \overline{\mathcal{BD}}_\delta$. In fact, we
will prove that these operators are the only ones that exhibit
almost $\overline{\mathcal{BD}}_\delta$-recurrence.
 \begin{lemma}\label{lema B_delta}
Let $T$ be a continuous mapping on  a nonempty connected open set  $V$ and let $\delta>0$ be such that for every nonempty open set $U\sub V$ there is $x_U\in U$ such that $N(x_U,U)\in\overline{\mathcal{BD}}_\delta$. Then, there is $j\le \delta^{-1}$ such that $T^j|_V=Id_V,$ where $Id_V$ is the identity operator on $V.$
\end{lemma}
\begin{proof}
Let’s proceed by contradiction. Consider $k>\delta -1$. We will demonstrate that for every
nonempty open set $U \subseteq V$, there exists $j$ with $1\le j \le k$ such that $T^j (U )\cap U \ne\emptyset$. Indeed,
note that $\overline{\mathcal{BD}}_\delta=\bigcup_{r\ge\delta}\bigcap_{N\ge1}\mathcal A_{r,N}$, where $A\in \mathcal A_{r,N}$ if and only if there exists $m \ge 0$ such
that $\frac{\#\{A \cap [m, m + N ]\}}{(N + 1)} > r$. By assumption, there exists $x_U \in U$ such that
$N (x_U , U )\in \overline{\mathcal{BD}}_\delta$. Specifically, $N (x_U , U ) \in A_{r,k}$ for some $r\ge\delta$. Hence, there exists $m \ge 0$
such that $\#\{N (x_U , U ) \cap [m, m + k]\} > (k + 1)r > k\delta > 1$. In other words, there exist $n\ge 0$
and $n + j$ with $1 \le j \le k$ such that $\{n, n + j\} \subset N (x_U , U )$. Therefore, $T^n(x_U )\in U$ and
$T^j (T^n(x_U ))\in U.$

Suppose that there exists $x\in V$ such that $T^j(x)\neq x$ for all $j$ with $1\le j\leq k$. Thus there are $V_j$ open neighbourhoods of $T^jx$ such that $x\notin \overline{V_j}$, for $j=1,\dots,k.$ By continuity, there is some open neighbourhood $V_0\sub V$ of $x$ such that $T^j(V_0)\sub V_j$ and $V_0\cap V_j=\emptyset$ for all $j$ with $1\le j\leq k$. This is a contradiction, because it implies that $T^j(V_0)\cap V_0=\emptyset $ for $j=1,\dots, k.$

This implies that every vector in $V$ is  periodic  of period at most $ k$.
It is easy
to check that the set $\{x \in {V} : {T^jx} = x\}$ of
periodic vectors of period ${j}$ contains all its accumulation
points; hence, it is closed. Therefore, the connectivity of ${V}$
 implies that all vectors in $V$ must have period $j$, for some $j=1,\dots,k,$ i.e. $T^j|_V=Id_V$.  
\end{proof}

The fact that any Banach space is connected combined with Lemma 3.17, yields the following
result.

\begin{corollary}\label{BD>delta}
Let $T$ be an operator acting on a Banach space $X$. If $T$ is almost   $\overline{\mathcal{BD}}_\delta$-recurrent, then  $T^k= Id_X$, for some  $k\le \delta^{-1}$.
\end{corollary}

\section{$\F$-recurrent backward shift operators \label{section shifts}}\label{section backwardshifts}
In this section we characterize almost $\F$-recurrent unilateral and bilateral backward shift operators. 

In the following we will consider  a Banach space  $X$ with basis $\{e_n:\, n\in\mathbb N_0\}$ where the (unweighted) backward shift $B$, defined by $B(e_n)=e_{n-1}$ for $n\geq 1$ and $B(e_0)=0$, acts continuously. We remark that we will mostly work with unweighted shifts for simplicity, but all the results can be translated to the weighted setting through a conjugacy argument. Indeed, given any weighted backward shift with weights $(w_n)_{n\ge 1}$ on $X$ defined by $B_w(e_n)=w_ne_{n-1}$ for $n\geq 1$ and $B_w(e_0)=0$, we can consider the unweighted backward shift $B$ acting on the space $X(v):=\{x=\sum_{n=0}^\infty x_ne_n\,:\,\sum_{n=0}^\infty x_nv_ne_n\in X\},$ where $v_n=(w_1\dots w_n)^{-1}$ for $n\ge 1$ and $v_0=1$. We transfer the topology from $X$ to $X(v)$ so that the the canonical mapping $\phi_v: X(v)\to X$, $(\phi_v(x))_n\mapsto (x_nv_n)_n$ is an isomorphism and it  satisfies that $\phi_v B=B_w\phi_v$.  Then $B:X(v)\to X(v)$ and $B_w:X\to X$ are factors of each other so their dynamic is equivalent (see e.g. \cite[Section 4.1]{GroPer11}). In particular, $B$ is either $\F$-hypercyclic, $\F$-recurrent or almost $\F$-recurrent if and only if $B_w$ has the same property.
\begin{center}
\begin{tikzcd}
    X(v) \arrow{r}{B} \arrow{d}[swap]{\phi_v} & X(v) \arrow{d}{\phi_v} \\
    X \arrow{r}[swap]{B_w} & X
\end{tikzcd}
\end{center}

It was proved in \cite[Theorem 8.5]{BonGroLopPer22JFA} that if there is a dense set of vectors satisfying  $T^n(x)\to 0$, then any $\F$-recurrent operator is also $\F$-hypercyclic whenever $\F$ is a uniformly finitely invariant upper family. The same proof holds for almost $\F$-recurrent operators. Recall that an upper family $\F=\bigcup_{\delta\in D} \A_\delta,$ is said to be uniformly finitely invariant provided that if $A\in\F$ then there is some $\delta\in D$ such that for every cofinite subset $C$ we have that $A\cap C\in \F_\delta$, see \cite[Definition 2.9]{BonGro18}.
\begin{proposition}\label{vectores densos tienden a cero}
 Let $\F=\bigcup_{\delta\in D} \A_\delta$ be a uniformly finitely invariant upper  family and let $T$ be a linear operator such that there is a dense set of vectors with $T^n(x)\to 0$. If $T$ is almost $\F$-recurrent then it is $\F$-hypercyclic.

\end{proposition}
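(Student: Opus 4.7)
The strategy is to verify condition (3) of Theorem~\ref{equivalencias upper}, which characterises $\F$-hypercyclicity for upper families; the argument is the one given for $\F$-recurrence in \cite{bonilla2020frequently}, and a direct inspection shows that recurrence is used there only through the weaker $\PP_\F$ property. Fix a nonempty open set $V$, pick a smaller nonempty open $V_0\sub V$ and $\epsilon>0$ with $V_0+B(0,\epsilon)\sub V$. Applying $\PP_\F$ to $V_0$ yields $y\in X$ with $A:=N(y,V_0)\in\F$, and uniform left invariance then produces $\delta\in D$, depending only on $V$, such that $A-n\in\F_\delta$ for every $n\in\zN$; in particular $A\in\F_{\delta,m}$ for every $m\in M$.

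Given any nonempty open $U$ and some $m\in M$, write $D_0:=\{z\in X:T^n z\to0\}$ for the dense subspace provided by the hypothesis. Since $D_0$ is dense and $U-y$ is a nonempty open set, pick $d\in D_0\cap(U-y)$ and set $x:=y+d\in U$. Splitting $T^n(x)=T^n(y)+T^n(d)$, using $T^n(y)\in V_0$ for $n\in A$ together with $T^n(d)\to0$, one obtains $T^n(x)\in V_0+B(0,\epsilon)\sub V$ for every $n\in A$ beyond an index $N_d$ past which $\|T^n(d)\|<\epsilon$. Therefore $N(x,V)\supset A\cap[N_d,\infty)$.

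It remains to show that this cofinite tail of $A$ lies in $\F_{\delta,m}$. Combining the finitely hereditary upward property of $\F_{\delta,m}$ with uniform left invariance of $A$, a finite witness for $A-N_d\in\F_\delta$, translated back by $N_d$, sits inside $A\cap[N_d,\infty)\sub N(x,V)$, forcing $N(x,V)\in\F_{\delta,m}$. The main obstacle is that the threshold $N_d$ depends on the perturbation $d$---which may need to be sizable because $y$ is not guaranteed to lie near $U$---whereas the finite witness of $A$ inside $\F_{\delta,m}$ is intrinsic to the family, so a naive match will fail. The remedy, borrowed directly from \cite{bonilla2020frequently}, is to run the same construction at a suitable orbit point $T^{n_0}(y)\in V_0$ instead of at $y$ itself: density of $D_0$ in $U-T^{n_0}(y)$ together with uniform left invariance of $A-n_0$ allow one to relocate the witness above the decay threshold, after which the argument closes.
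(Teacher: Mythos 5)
Your construction is exactly the paper's: split $V$ as $V_0+B(0,\epsilon)\sub V$ (the paper writes $W+W_0\sub V$), obtain $y$ with $A=N(y,V_0)\in\F$ from the $\PP_\F$ property, fix $\delta$ by uniform left invariance, perturb by $d$ with $T^n(d)\to 0$, and conclude $N(y+d,V)\supseteq A\cap[N_d,\infty)$. Up to there the proposal agrees with the paper. The divergence is that the paper at this point simply asserts $N(y+d,V)\in\F_\delta$ and applies condition (2) of Theorem \ref{equivalencias upper}, whereas you route the conclusion through condition (3) and the finite witnesses of the $\F_{\delta,m}$ --- and that is where the argument breaks.

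The gap is in your last paragraph. If $R$ is a finite witness for $A-N_d\in\F_{\delta,m}$, then $R+N_d\sub A\cap[N_d,\infty)\sub N(x,V)$ only gives $N(x,V)-N_d\supseteq R$, hence $N(x,V)-N_d\in\F_{\delta,m}$; it does \emph{not} ``force $N(x,V)\in\F_{\delta,m}$'', because finitely-hereditary-upwardness transfers membership to supersets of the witness itself, not of its translates. Passing from $N(x,V)-N_d$ back to $N(x,V)$ requires right-invariance of the $\F_{\delta,m}$, i.e.\ property \eqref{invariance}, which the paper explicitly notes fails for $\overline{\mathcal D}$ and $\underline{\mathcal{D}}$ --- so your deduction fails precisely in the motivating case of upper frequent hypercyclicity. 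The proposed remedy does not repair this: replacing $y$ by $T^{n_0}(y)$ merely replaces $A$ by $A-n_0$, and the perturbation $d$ (hence the threshold $N_d$) must still be chosen \emph{after} $n_0$, since $d\in U-T^{n_0}(y)$; consequently there is still no mechanism forcing the finite witness of $A-n_0$ in $\F_{\delta,m}$ to lie above $N_d$, and the same mismatch recurs one level down. The way to close the proof is the paper's: with $\delta$ chosen by uniform left invariance so that $A-n\in\F_\delta$ for every $n$, one checks directly that the set $N(y+d,V)$, which contains $A\cap[N_d,\infty)$, belongs to $\F_\delta$, and then invokes Theorem \ref{equivalencias upper}(2) --- no translation of witnesses is ever needed.
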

\begin{proof}
Let $U,V$ be nonempty open sets, and take $W_0,W$ nonempty open sets such that $0\in W_0$ and $W+W_0\sub V$. Hence, there is $y\in W$ and $\delta\in D$ such that $N(y,W)\cap C\in \F_\delta$ for every cofinite set $C$.  Take $x\in U-y$ such that $T^n(x)\to 0$. Therefore $x+y\in U$ and for large enough $n$, $T^n(x)\in W_0$. Hence if $n\in N(y,W)$ is large enough, $T^n(x+y)\in W+W_0\sub V$. Therefore, $N(x+y,V)\in \F_\delta$. By Theorem \ref{equivalencias upper} this implies that $T$ is $\F$-hypercyclic.
\end{proof}
We note that Proposition \ref{vectores densos tienden a cero} does not necessarily hold for non-upper families, for instance, it is not true for the family of syndetic sets (see Remark 
\ref{rem: ejemplo non upper}).

\begin{corollary}\label{coro: w* cont con Tnx->0, PbF sii F-hyp}
Let $T$ be an adjoint operator on a separable Banach space with a dense set of vectors such that $T^n(x)\to 0$ and let $\F$ be an upper uniformly finitely invariant family. If $T$ is almost ${b\F}$-recurrent  then it is 
$\F$-hypercyclic. 
\end{corollary}
\begin{proof}
Since by definition upper families are left-shift invariant,  Theorem \ref{PBF implica PF} implies that $T$ almost $\F$-recurrent. Hence, by  Proposition \ref{vectores densos tienden a cero} $T$ is $\F$-hypercyclic.
\end{proof}

\begin{corollary}\label{coro: existe Reiter hyp no upper recurrent}
    There exists a unilateral weighted backward shift  on $c_0$ which is $\mathcal{\overline{BD}}$-hypercyclic (and in particular almost $\mathcal{\overline{BD}}$-recurrent) but not almost $\mathcal{\overline{D}}$-recurrent.
\end{corollary}
\begin{proof}
In \cite{Bes16} an example of a reiteratively hypercyclic weighted backward shift over $c_0$ which is not  upper frequently hypercyclic  was given. Since any unilateral weighted backward shift has a dense set of vectors such that $B_w^{n}(x)\to 0$, Proposition \ref{vectores densos tienden a cero} implies that this operator is not almost ${\overline {\mathcal D}}$-recurrent. 
\end{proof}

In \cite{Bes16} it was proved that for $\ell_p$ spaces, reiteratively hypercyclic weighted backward shifts are indeed frequently hypercyclic. The main Theorem of \cite{charpentier2019chaos} states that, on spaces with boundedly complete unconditional basis,  upper frequently hypercyclic weighted backward shifts are both frequently hypercyclic and chaotic (recall that chaotic operators are those which are hypercyclic and have a dense set of periodic vectors). 
Recall that a basis $\{e_n\}$ is boundedly complete if it satisfies that for each sequence of scalars $(a_n)_n$ such that $\Big(\sum_{0\le k\le n} a_ke_k \Big)_{n\ge 0}$ is bounded,  the series $\sum_{k=0}^\infty a_ke_k $ is convergent. Recall also that a basis $\{e_n\}$ is unconditional with unconditional constant $C$ if for all $n\in\mathbb N,$ $\Big\|\sum_{0\le k\le n} a_ke_k \Big\|\le C\Big\|\sum_{0\le k\le n} b_ke_k \Big\|$ whenever $a_0,\dots,a_n,b_0,\dots,b_n$ are scalars such that $|a_k|\le|b_k|$ for $0\le k\le n$.  
\begin{theorem}
Let $X$ be a Banach space with  boundedly complete basis $\{e_n:\, n\in\mathbb N_0\}$. Let $B_w$ be a weighted shift on $X$. Then the following are equivalent.
\begin{enumerate}
    \item [i)] $B_w$ is almost  reiteratively recurrent;
    \item [ii)] $B_w$ is reiteratively hypercyclic;
    \item [iii)] $B_w$ is upper frequently
    hypercyclic;
\item [iv)] $B_w$ is $\overline{\mathcal{LD}}$-   hypercyclic.    
\end{enumerate}
If moreover, the basis is unconditional, then i)-iv) are equivalent to 
\begin{enumerate}
    \item [v)] $B_w$ is frequently hypercyclic and
    \item [vi)] $B_w$ is chaotic.
\end{enumerate}
\end{theorem}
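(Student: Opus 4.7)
The plan is to close a cycle of implications linking (i)--(iv), with the additional detour through (v)--(vi) opening up in the unconditional setting. The straightforward direction $(iv) \Rightarrow (iii) \Rightarrow (ii) \Rightarrow (i)$ is immediate from the inequalities \eqref{log density vs density} between the various densities, together with the obvious fact that $\F$-hypercyclicity entails the $\PP_{\F}$ property. Therefore the substantial content of the first block lies in recovering (iii) and (iv) from the weakest hypothesis (i).

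The structural ingredient that unlocks everything is the fact that a boundedly complete basis $\{e_n\}$ realises $X$ canonically as the dual of $Z := \overline{\mathrm{span}}\{e_n^{*}\} \subseteq X^{*}$, with $w^*$-convergence on $X$ coinciding with coordinatewise convergence against $\{e_n^{*}\}$. Since $e_n^{*}(Bx) = e_{n+1}^{*}(x)$, the shift $B$ is the adjoint of a forward shift on $Z$ and is in particular $w^*$-$w^*$ continuous. This places us squarely inside the setting of Section 3, so that Corollary \ref{coro to PBF implica PF}(1) upgrades the $\PP_{\BD}$ property of $B$ to the $\PP_{\mathcal D}$ property. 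Because the family $\mathcal D$ of sets of positive density is contained in both $\overline{\mathcal D}$ and $\overline{\mathcal{LD}}$ by \eqref{log density vs density}, $B$ automatically has the $\PP_{\overline{\mathcal D}}$ and $\PP_{\overline{\mathcal{LD}}}$ properties as well.

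To turn these recurrence-type properties into genuine $\F$-hypercyclicity I would invoke Proposition \ref{vectores densos tienden a cero}: finite linear combinations of the basis vectors form a dense subset of $X$ on which every iterate $B^n$ eventually vanishes, and both $\overline{\mathcal D}$ and $\overline{\mathcal{LD}}$ are upper families (the verification for $\overline{\mathcal{LD}}$ parallels the one for $\overline{\mathcal D}$, since upper logarithmic density is also shift-invariant and finitely stable in the sense required by the definition of upper). The proposition then delivers (iii) and (iv) directly, closing the cycle for the first set of equivalences.

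For the additional equivalences with (v) and (vi) under the unconditional hypothesis, I would appeal to the main theorem of \cite{charpentier2019chaos}, which asserts that an upper frequently hypercyclic backward shift along a boundedly complete unconditional basis is automatically frequently hypercyclic and chaotic. Combined with the trivial implications $(v) \Rightarrow (iv)$ (because $\underline{\mathcal D} \subseteq \overline{\mathcal{LD}}$ by \eqref{log density vs density}) and $(vi) \Rightarrow (i)$ (dense periodic vectors supply return sets containing arithmetic progressions, hence the $\PP_{\BD}$ property), this adjoins (v) and (vi) to the cycle. The main obstacle I foresee is the bookkeeping around the $w^*$-$w^*$ continuity step and the explicit verification that $\overline{\mathcal{LD}}$ fits into the upper-family framework required by Proposition \ref{vectores densos tienden a cero}; once those technicalities are handled, the argument is essentially an assembly of results already established in the preceding sections together with the Charpentier--Menet--Mouze theorem.
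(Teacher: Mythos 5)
Your proof is correct and follows essentially the same route as the paper: you establish that the boundedly complete basis makes $B$ a $w^*$-$w^*$ continuous operator on a dual space, pass from $\PP_{\overline{\mathcal{BD}}}$ to $\PP_{\mathcal D}$ via Theorem \ref{PBF implica PF} (through Corollary \ref{coro to PBF implica PF}), upgrade to genuine $\overline{\mathcal D}$- and $\overline{\mathcal{LD}}$-hypercyclicity with Proposition \ref{vectores densos tienden a cero}, and invoke \cite[Theorem 2.1]{charpentier2019chaos} for the unconditional case --- exactly the paper's combination, which it packages as Corollary \ref{coro: w* cont con Tnx->0, PbF sii F-hyp} applied with $b\overline{\mathcal{LD}}=b\overline{\mathcal D}=\overline{\mathcal{BD}}$. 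The only blemishes are cosmetic: the reference is to Charpentier--Grosse-Erdmann--Menet (not Menet--Mouze), and, as you note, one should check that $\overline{\mathcal{LD}}$ is an upper family, a point the paper also uses implicitly.
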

\begin{proof}
As usual, it is sufficient to prove the result in the unweighted case.
Since $b\overline{\mathcal{LD}}=b\overline{\mathcal{D}}=\overline{\mathcal{BD}}$, by Corollary \ref{coro: w* cont con Tnx->0, PbF sii F-hyp}, it suffices to show that $B$ is an adjoint operator.
It is well known that $\{e_n\}$ is boundedly complete if and only if $X$ is isomorphic to $H^*$, where $H=\overline{ span\{e_n^*\}}\sub X^*$ and $\{e_n^*\}$ denotes the dual basis. The isomorphism is given by the mapping $j:X\to H^*$, defined as $j(x)(h)=h(x)$ for $x\in X$ and $h\in H$, see e.g. \cite[Theorem 3.2.10]{AlbKal06}.

Consider $S:=B^*:X^*\to X^*$, or equivalently $B^*:H^{**}\to H^{**}$.  Now we notice that $S(e_n^*)=e_{n+1}^*$. This implies that $S(H)=S(\overline{ span\{e_n^*\}})\sub \overline{ span\{e_n^*\}}=H$, i.e.
$S|_H:H\to H$ is well defined and hence $(S|_H)^*=B$.

The last assertion now follows from  \cite[Theorem 2.1]{charpentier2019chaos}. 
\end{proof}

Recall that an hereditary upward family $\F$ is said to be finitely invariant if $A\cap[m,\infty)\in \F$ for every $A\in \F$ and every $m\in \zN$.
We now
turn our attention to the study of almost
$\F $-recurrent unweighted backward shifts, where $\F $ is a
finitely invariant hereditary upward family.
\begin{theorem}\label{ carac PF uni}
Let $X$ be Banach space with  basis $\{e_n:\, n\in\mathbb N_0\}$ and suppose that the backward shift is well defined. Let $\F$ be a finitely invariant hereditary upward family.  Consider the following statements.
\begin{enumerate}
    \item [i)]$B$ is almost $\F$-recurrent;
    \item [ii)]for every $\varepsilon>0$ and every $p\in\mathbb N$ there is a set $A_{p,\varepsilon}\in \F$ such that
\begin{enumerate}
\item $\sum_{n\in A_{p,\varepsilon}} e_{n+p}$ converges and
\item for any $m\in A_{p,\varepsilon}$, $$\Big\|\sum_{n>m,n\in A_{p,\varepsilon}} e_{n-m+p}\Big\|<\varepsilon.$$
\end{enumerate}
\end{enumerate}
   Then $ii)\Rightarrow i)$. If the basis is unconditional $ii)\Leftrightarrow i)$.
\end{theorem}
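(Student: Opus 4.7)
For the direction \emph{ii)}$\Rightarrow$\emph{i)}, given a nonempty open set $U\subset X$ I would produce $x\in X$ with $N(x,U)\in\mathcal F$ as follows. First pick a finite-support approximation $y=\sum_{k=0}^p a_k e_k\in U$ with a small ball $B(y,3\rho)\subset U$, set $C:=\sum_{k=0}^p|a_k|$, and apply \emph{ii)} with $\varepsilon:=\rho/C$ to obtain $A\in\mathcal F$ satisfying (a) and (b). Using the finite invariance of $\mathcal F$ we may further assume $\min A>p$. Define $z_p:=\sum_{n\in A}e_{n+p}$ (convergent by (a)), and $z_k:=B^{p-k}z_p$ for $0\leq k\leq p$; continuity of $B$ identifies $z_k$ with $\sum_{n\in A}e_{n+k}$. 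Set $x:=\sum_{k=0}^p a_k z_k$.

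The core computation is the expansion of $B^m x - y$ for $m\in A$ into a ``future'' piece indexed by $n\in A$ with $n>m$ and a ``past'' piece depending on the presence of $m-d$ in $A$ for $d\in[1,p]$. The future piece has norm at most $C\varepsilon<\rho$ by (b), so it falls inside $B(0,\rho)$. The past piece vanishes on the subset $A':=\{m\in A:A\cap[m-p,m-1]=\emptyset\}$ of elements isolated from below; on $A'$ one therefore has $B^m x\in U$, so $A'\subset N(x,U)$. The crux of this direction is hence to show $A'\in\mathcal F$, and this is where I expect the main obstacle to lie: $\mathcal F$ is only hereditarily upward, so a priori thinning $A$ need not stay in $\mathcal F$. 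I would resolve it by partitioning $A$ into the $2^p$ classes indexed by the past-type $T(m):=\{d\in[1,p]:m-d\in A\}$ and either exploiting a pigeonhole property implicit in applying \emph{ii)} on a sparser scale, or by modifying $x$ with explicit correction terms that cancel the past piece on each class with $T\neq\emptyset$.

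For the converse \emph{i)}$\Rightarrow$\emph{ii)} under unconditionality, given $\varepsilon>0$ and $p$ I would take $U:=B\bigl(\sum_{k=0}^p e_k,\eta\bigr)$ with $\eta$ small, apply \emph{i)} to obtain $x=\sum c_n e_n$ with $A:=N(x,U)\in\mathcal F$, and exploit unconditionality to project the inequality $\|B^m x-\sum_{k=0}^p e_k\|<\eta$ onto the spans of initial and tail basis elements. This yields $c_{m+k}\approx 1$ for $k\in[0,p]$ together with a smallness estimate on $\sum_{j>p}c_{j+m}e_j$. Condition (a) follows by writing $e_{n+p}=c_{n+p}^{-1}(c_{n+p}e_{n+p})$ and using unconditional convergence of $x$ with the bounded multipliers $c_{n+p}^{-1}\approx 1$. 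Condition (b) follows because, modulo the same multipliers, $\sum_{n>m,\,n\in A}e_{n-m+j}$ is a subseries of the controlled tail $\sum_{j'>p}c_{j'+m}e_{j'}$, and the unconditional constants are absorbed by shrinking $\eta$ appropriately.
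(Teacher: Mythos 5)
The forward direction \emph{ii)}$\Rightarrow$\emph{i)} of your proposal has a genuine gap, and it sits exactly where you flag it: you reduce the problem to showing that the thinned set $A'=\{m\in A: A\cap[m-p,m-1]=\emptyset\}$ belongs to $\mathcal F$, and then offer only speculative strategies. Neither of them works in general: a pigeonhole over the $2^p$ past-types would require $\mathcal F$ to be partition regular, which hereditarily upward families need not be (e.g.\ $\mathcal{S}yn$: a syndetic set can be split into finitely many non-syndetic pieces), and ``correction terms'' cancelling the past contribution are not constructed. The point you missed is that no thinning is needed at all, and this is precisely why condition (b) is stated for $0\le j\le p$ rather than $1\le j\le p$. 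Applying \emph{ii)} with $\varepsilon$ smaller than the constant $\beta_p$ for which $\|z\|<\beta_p$ forces $|z_d|<\tfrac12$ for all $d\le 2p$, condition (b) with $j=0$ gives $\bigl\|\sum_{n>m,\,n\in A}e_{n-m}\bigr\|<\beta_p$; since the $d$-th coordinate of this vector is $1$ whenever $m+d\in A$, one concludes that $A$ contains no two elements at distance $\le 2p$. Hence the ``past piece'' vanishes identically for every $m\in A$, $A'=A$, and $N(x,U)\supseteq A\in\mathcal F$ by upward heredity. This separation observation is the crux of the paper's proof of this implication, and without it (or some substitute) your argument does not close.

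The converse direction under unconditionality is essentially sound and close to the paper's, with one difference worth noting: the paper tests against $y=\sum_{j=1}^p j\,e_j$, whose \emph{distinct} coefficients make it immediate that two return times $m<n$ satisfy $n-m\ge p$ (otherwise overlapping windows would force $|j-k|<1$ for some $j\ne k$), so that $\sum_{n>m}e_{n-m+j}$ really lives in the tail. With your choice $y=\sum_{k=0}^p e_k$ the same separation does hold, but it is not automatic from ``$c_{m+k}\approx 1$ on $[0,p]$'': you must additionally use the coordinate estimates up to index $2p$ (namely $x_{m+i}\approx 0$ for $p<i\le 2p$ versus $x_{n+p}\approx 1$) to exclude $0<n-m\le p$. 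As written, your claim that $\sum_{n>m,\,n\in A}e_{n-m+j}$ is a subseries of the controlled tail silently assumes this separation.
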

Condition $ii)$ above is also equivalent to $B$ being almost $\F$-recurrent and hypercyclic. Indeed,  almost $\F$-recurrence implies recurrence (see \cite[Theorem 2.1]{CosManPar14}), and recurrent shifts are hypercyclic (see e.g. \cite{bonilla2020zero,BonGroLopPer22JFA,ChaSec12} ).

Bonilla and Grosse-Erdmann in \cite[Theorem 5.1]{BonGro18} showed that if $\F$ is a  uniformly finitely invariant upper family then    $\F$-hypercyclic backward shifts are exactly those that satisfy condition $ii)$ of Theorem \ref{ carac PF uni}. 
We have thus shown that exactly the same condition characterize almost $\F$-recurrent backward shifts operators for general (non necessarily upper) finitely invariant families. 

By combining Proposition \ref{vectores densos tienden a cero} and Theorem \ref{ carac PF uni}, we thus obtain an alternative proof of Theorem 5.1 from \cite{BonGro18}.
\begin{corollary}\label{coro: equiv almost F-rec F-hyp}
   Let $X$ be a Banach space with unconditional basis $\{e_n : n \in \mathbb N_0\}$ and $B$ the unweighted
backward shift operator acting on $X$. Let $\F$ be a uniformly finitely invariant upper hereditary upward family.  Then the following are equivalent.
\begin{enumerate}
    \item [i)] $ B$ is almost $\F$-recurrent;
    \item [ii)] $ B$ is $\F$-hypercyclic.
\end{enumerate}
\end{corollary}
\begin{remark}\rm\label{rem: ejemplo non upper}
It is worth noting that condition $ii)$ in Theorem \ref{ carac PF uni} does not characterize $\F$-hypercyclic backward shifts for general (non upper) finitely invariant families. Indeed, any chaotic operator is almost ${\mathcal Syn}$-recurrent. In particular, there exist almost $\mathcal Syn$-recurrent backward shift operators(which are characterized by Theorem \ref{ carac PF uni}). But, on the other hand, there are no ${\mathcal Syn}$-hypercyclic operators, see  \cite[Proposition 3]{Bes16} (or Corollary \ref{BD>delta}).
\end{remark}

\begin{proof}[Proof of Theorem \ref{ carac PF uni}]
By the continuity of $B$ there is $\beta_p$ such that 
\begin{equation}\label{beta_p}
    \|x\|<\beta_p \text{ then } |x_j|<1/2 \text{ for every }0\le j\leq p.
\end{equation}

$i)\Rightarrow ii).$ Let $p,\varepsilon>0.$ 
 Let $y=\sum_{j=0}^p j e_j$ and $x\in X$
such that $A_{p,\varepsilon}:=\{n: \|B^n(x)-y\|<min\{\f{\varepsilon}{2C},\beta_p\}\}\in \F$, where $C$ is the unconditional basis constant.

We claim that $A_{p,\varepsilon}$ satisfies conditions $(a)$ and $(b)$.

We notice first that if $n\in A_{p,\varepsilon}$ and $0\le j\leq p$, then $\|B^n(x)-y\|<\beta_p$ and hence by \eqref{beta_p}, 
\begin{equation}\label{x_{n+j}}
|x_{n+j}-j|<\f{1}{2} \text{ for every }0\le j\leq p \text{ and } n\in A_{p,\varepsilon}.    
\end{equation}
This implies in particular that $\f{1}{|x_{n+p}|}<2$ for every $n\in A_{p,\varepsilon}$. Since the basis is unconditional and $x\in X$ we obtain that
$$
\sum_{n\in A_{p,\varepsilon}}  e_{n+p}=
\sum_{n\in A_{p,\varepsilon}} \f{1}{x_{n+p}} x_{n+p} e_{n+p}$$
converges in $X$. This shows condition $(a)$.

Let $m\in A_{p,\varepsilon}$. Then, if we denote by $[z]_k$ the $k$-th coordinate of a vector $z$,
$$
\left\|\sum_{n\in A_{p,\varepsilon}, n> m} \f{1}{x_{n+p}} [B^m(x)-y]_{n-m+p} e_{n-m+p}\right\|\leq   2C \|B^m(x)-y\|< \varepsilon.
$$

If $m<n\in A_{p,\varepsilon}$ and $0\le k\leq p,$ then by \eqref{x_{n+j}}, $|x_{n}-x_{m+k}+k|<1.$ Suppose $n=m+k$ for some $k$ such that $1\le k\le p$. Then we would have $k<1,$ which is impossible. Thus $n>m$ implies $n>m+p.$  Therefore,  $[B^m x-y]_{n-m+p}=x_{n+p}$ and thus,
$$\|\sum_{n\in A_{p,\varepsilon}, n> m} e_{n-m+p}\|=\|\sum_{n\in A_{p,\varepsilon}, n> m} \f{1}{x_{n+p}} [B^m(x)-y]_{n-m+p} e_{n-m+p}\|<\varepsilon. $$

$ii)\Rightarrow i).$ We have to show that $B$ is almost $\F$-recurrent. By the denseness of the finite sequences, it is enough to show that for any open ball $U$ centered at a finitely supported vector, there is $x\in U$ such that $N(x,U)\in\F$. Let $\varepsilon>0$ and $p\in\mathbb N$. Consider an open ball $U$ centered at 
 $y=\sum_{j=0}^p y_je_j\in X$  with radius  $\varepsilon>0.$ Let $A_{p,\tilde\varepsilon}\in \mathcal F$
satisfy conditions $(a)$
and $(b)$, where $\tilde \varepsilon:=\frac{\varepsilon}{M(p+1)\|y\|_\infty}$ and  $M := \max_{0\le k\le p}\|B\|^k$.

By  the finite invariance of $\F$ and condition $(a)$, we may assume that 
$$\|\sum_{n\in A_{p,\tilde\varepsilon}} e_{n+p}\|<{\tilde\varepsilon}$$
and that $A_{p,\tilde\varepsilon}\cap [0,p]=\emptyset$.

 Note that $(b)$ implies that  for any $m\in A_{p,\tilde\varepsilon}$ and $0\leq j\leq p$,
\begin{equation}\label{puedo suponer j}
    \Big\|\sum_{n>m,n\in A_{p,\tilde\varepsilon}} e_{n-m+j}\Big\|\leq \|B\|^{p-j}\|\sum_{n>m,n\in A_{p,\tilde\varepsilon}} e_{n-m+p}\|<\frac{\varepsilon\|B\|^{p-j}}{M(p+1)\|y\|_\infty}<\frac{\varepsilon}{C(p+1)\|y\|_\infty}.
\end{equation}

We may also assume that 
\begin{equation}\label{separados}
    n-m>p\text{ for any }n\text{ and }m\text{ in }A_{p,\tilde\varepsilon}\text{ with }n > m.
\end{equation} Indeed, by considering $\varepsilon<{\beta_p}{M(p+1)\|y\|_\infty}$, then $\|\sum_{n>m,n\in A_{p,\tilde\varepsilon}} e_{n-m}\|<\beta_p$.
Then by \eqref{beta_p}, for every $0\le j\le p$, the $j$-th coordinate of $\sum_{n> m, n\in A_{p,\tilde\varepsilon}} e_{n-m}$ is smaller than $\frac12$, and  this implies that $n-m>p$. 

By (a), the vector
 $$
  x:=y+\sum_{j=0}^p y_jB^{p-j}\Big( \sum_{n\in A_{p,\tilde\varepsilon}} e_{n+p}\Big)=y+\sum_{n\in A_{p,\tilde\varepsilon}} \sum_{j=0}^p y_j e_{n+j}\in X. 
 $$
Moreover, $x\in U$. Indeed, 
$$
\|x-y\|\le (p+1)\|y\|_\infty M\Big\| \sum_{n\in A_{p,\tilde\varepsilon}} e_{n+p}\Big\|<\varepsilon.
$$

Notice that by \eqref{separados} we have that if $m,n\in A_{p,\tilde\varepsilon}$ with $m>n$ and $j\leq p$ then $n-m+j\leq 0$. Consequently $B^m(e_{n+j})=0$. This implies that if $m\in A_{p,\tilde\varepsilon}$  
\begin{align*}
    B^m(x)-y=\sum_{n\in A_{p,\tilde\varepsilon},n>m }\sum_{j=0}^p y_j e_{n-m+j}  =\sum_{j=0}^p y_j\sum_{n> m, n\in A_{p,\tilde\varepsilon}} e_{n+j-m}
\end{align*}
and thus, by \eqref{puedo suponer j} 
$$
\|B^m(x)-y\|\le \sum_{j=0}^p|y_j|\Big\|\sum_{n\in A_{p,\tilde\varepsilon},n>m }e_{n-m+j}\Big\| <\varepsilon.
$$
 Therefore $B^mx\in U$ for every $m\in A_{p,\tilde\varepsilon}$. Hence $B$ is almost $\F$-recurrent.
\end{proof}

Applying a conjugacy argument we obtain a characterization for almost $\F$-recurrent weighted backward shifts.
\begin{theorem}\label{ carac PF uni con pesos}
Let $X$ be a Banach space with  basis $\{e_n:\, n\in\mathbb N_0\}$ and let $(w_n)_{n\in\mathbb N}\subset \mathbb R_+$ be such that the weighted backward shift $B_w$ is well defined. Let $\F$ be a finitely invariant hereditary upward family.  Consider the following statements.
\begin{enumerate}
    \item [i)]$B_w$ is almost $\F$-recurrent;
    \item [ii)]for every $\varepsilon>0$ and every $p\in\mathbb N$ there is a set $A_{p,\varepsilon}\in \F$ such that
\begin{enumerate}
\item $\sum_{n\in A_{p,\varepsilon}} \big(w_1\dots w_{n+p}\big)^{-1}e_{n+p}$ converges and
\item for any $m\in A_{p,\varepsilon}$, $$\Big\|\sum_{n>m,n\in A_{p,\varepsilon}} \big(w_1\dots w_{n-m+p}\big)^{-1}e_{n-m+p}\Big\|<\varepsilon.$$
\end{enumerate}
\end{enumerate}
   Then $ii)\Rightarrow i)$. If the basis is unconditional $ii)\Leftrightarrow i)$.
\end{theorem}
\begin{corollary}
Let $X$ be a Banach space with
unconditional basis $\{e_n : n \in \mathbb N_0\}$,
$(w_n)_n$ a bounded weight.
Let $\mathcal F$ be an upper uniformly finite invariant family. The
following are equivalent: \\
$i)$ $B_w$ is almost $\mathcal F$-recurrent;\\
$ii)$ $B_w$ is $\mathcal F$-hypercyclic.
\end{corollary}

Any chaotic operator is almost $\mathcal Syn$-recurrent. In the case of unweighted backward shifts acting on
Banach spaces with unconditional basis we have the following.
\begin{corollary}\label{Almost Syn backward shifts}
Let $X$ be a Banach space with unconditional basis. The backward
shift operator $B$ is almost $\mathcal Syn$-recurrent if and only if
it is chaotic.
\end{corollary}
\begin{proof}
Recall that $B$ is chaotic if and only if $\sum_{n=0}^\infty e_n$ converges, see e.g. \cite[Theorem 4.6]{GroPer11}.
If $B$ is almost $\mathcal Syn$-recurrent then, by Theorem \ref{ carac PF uni}, there is a syndetic set $A$ such that $\sum_{n\in A} e_n$ converges. Since $A$ has bounded gaps, there is $b>0$ such that $\zN\sub \bigcup_{0\le j\leq b} A-j$. 

If $C>0$ denotes the constant of the basis we obtain that $\sum_{n\in \zN_0} e_n$ is convergent because
$$\left\|\sum_{n\ge N} e_n \right\|\leq C(b+1)\left\|\sum_{0\le j\leq b} B^j\l \sum_{n\in A\cap[N,+\infty)} e_n\r\right\|,$$  which tends to 0 as $N\to\infty$.
\end{proof}

\begin{corollary}
    There exists a unilateral weighted backward shift $B_w$ on $c_0$ which is almost $\mathcal{\overline{D}}$-recurrent but not almost $\mathcal{\underline{D}}$-recurrent.
\end{corollary}
\begin{proof}
In \cite[Theorem 13]{BayRuz15} an example of an upper frequently hypercyclic but not frequently hypercyclic weighted backward shift in $c_0$ was given. Since the operator is upper frequently hypercyclic, it is almost $\overline {\mathcal D}$-recurrent. The main argument given by the authors to prove that the operator is not frequently hypercyclic is that for any set of positive lower density $A$, $\lim_{n\in A} \prod_{j=1}^n w_n  \nrightarrow \infty$ (See specifically [step 5.3, Theorem 13]\cite{BayRuz15}). By equivalence ii) a) of Theorem \ref{ carac PF uni con pesos} applied to $c_0$, it follows that the operator is not almost frequently hypercyclic. 
\end{proof}

In \cite{CardeMur22} it was observed that there exists a weighted backward shift in $c_0$ such that it is $
\kAP$-hypercyclic but not upper frequently hypercyclic (see also \cite {Bes16}). In particular, the operator is almost $\kAP$-recurrent and almost $\mathcal {PS}yn$-recurrent. On the other hand, since chaotic operators weighted backward shifts are frequently hypercyclic, the operator can not be chaotic. By Corollary \ref{Almost Syn backward shifts}, it follows that the operator is not almost $\mathcal Syn$-recurrent.

We now consider bilateral shifts. Our main focus will be on the unweighted case, but as usual, all the results can be translated to the weighted case. 

    Let   $X$ be a Banach space with basis $\{e_n:\, n\in\mathbb Z\}$ where the (unweighted) backward shift $B$, defined by $B(e_n)=e_{n-1}$, is continuous. A conjugation argument similar to the unilateral case allows to transfer the results to the weighted shift setting. The bilateral weighted backward shift with weights $(w_n)_{n\in\mathbb Z}$ on $X$ is conjugate to the unweighted backward shift on  $X(v):=\{x=\sum_{n\in\mathbb Z} x_ne_n\,:\,\sum_{n\in\mathbb Z} x_nv_ne_n\in X\},$ with $v_n=(w_{n+1}\dots w_0)$ in case $n<0$,    $v_0=1$ and $v_n=(w_1\dots w_n)^{-1}$ in case $n>0$.
Indeed, the mapping $\phi_v: X(v)\to X$, $(\phi_v(x))_n\mapsto (x_nv_n)_n$ is an isomorphism and it  satisfies that $\phi_v B=B_w\phi_v$.  Then $B:X(v)\to X(v)$ and $B_w:X\to X$ are factors of each other (see again \cite[Section 4.1]{GroPer11} for more details). In particular, $B$ is either $\F$-hypercyclic, $\F$-recurrent or almost $\F$-recurrent if and only if $B_w$ is.

In \cite{Gro19} a characterization of $\F$-hypercyclic bilateral shifts was proved, which involves a sequence of sets $(A_p)_p$ that interact with each other. We next present a characterization of almost $\F$-recurrent bilateral shifts  which is simpler in the sense that it only needs  a single set $A$, and is very similar to the characterization of the unilateral shifts. It should be mentioned that an advantage of the characterization  \cite[Theorem 14]{Gro19} is that it is in some sense symmetric, and this allows the author to deal with the inverse operator.
\begin{theorem}\label{ carac PF bil}
Let $\F$ be a finitely and left-shift invariant hereditary upward family.
Let $\{e_n :\, n\in \mathbb Z\}$ be a   basis of the Banach space $X$ and $B$ a well defined backward shift. Consider the following statements.
\begin{enumerate}
    \item [i)] $B$ is almost $\F$-recurrent;
    \item [ii)] for every $\varepsilon>0$ and every  $p$ there is a set $A=A_{p,\varepsilon}\in\F$ such that
\begin{enumerate}
\item $\sum_{n\in A} e_{n+p}$ converges and,
\item for any $m\in A$,
$$\Big\|\sum_{n\neq m, n\in A} e_{n-m+p}\Big\|<\varepsilon.$$

\end{enumerate}
\end{enumerate}
  Then $ii)\Rightarrow i)$. If the basis is unconditional $ii)\Leftrightarrow i)$.
\end{theorem}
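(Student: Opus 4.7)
The strategy is to adapt the proof of Theorem~\ref{ carac PF uni} with minor symmetrization: $j$ now ranges over $[-p,p]$ instead of $[1,p]$, and one must pick a test vector $y$ whose support is contained in this symmetric window while still having nonzero coefficients that are mutually separated by at least $1$.

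For $ii)\Rightarrow i)$, fix a nonempty open set; by density of finitely supported vectors it suffices to approximate $y=\sum_{|j|\le p} y_j e_j$. Given $\varepsilon>0$, take $A=A_{p,\varepsilon}\in\F$ as in $(a),(b)$, and define
\[
x \;:=\; \sum_{n\in A}\sum_{|j|\le p} y_j\, e_{n+j}\;=\;\sum_{|j|\le p} y_j\, B^{p-j}\!\Big(\sum_{n\in A} e_{n+p}\Big),
\]
which converges by $(a)$ and the continuity of $B^{p-j}$ (note $p-j\in[0,2p]$, so we only apply nonnegative powers of $B$). For $m\in A$, the term $n=m$ produces exactly $y$, so
\[
B^m(x)-y \;=\; \sum_{|j|\le p} y_j\sum_{\substack{n\in A\\ n\ne m}} e_{n-m+j},
\]
and $(b)$ gives $\|B^m(x)-y\|<\varepsilon\sum_{|j|\le p}|y_j|$. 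Hence $A\subseteq N_B(x,U)$ for an appropriate neighborhood $U$ of $y$, which yields $N_B(x,U)\in\F$.

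For $i)\Rightarrow ii)$ under the unconditional basis hypothesis, fix $p$ and $\varepsilon>0$, let $C$ be the unconditional basis constant, and let $\beta_p$ control the first $2p+1$ coordinates and the norms $\|B^j\cdot\|$ as in the unilateral proof. Choose the test vector $y=\sum_{|j|\le p}(j+p+1)e_j$, whose coefficients $c_j=j+p+1$ lie in $[1,2p+1]$ and satisfy $|c_j-c_k|\ge 1$ for $j\ne k$. Using the $\PP_\F$ property, pick $x$ so that
$A:=\{n:\|B^n(x)-y\|<\delta\}\in\F$, where $\delta$ is a sufficiently small multiple of $\varepsilon/C$. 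For $n\in A$ and $|j|\le p$ one has $|x_{n+j}-c_j|<\delta$, so $|x_{n+j}|>\tfrac12$ and $1/|x_{n+j}|<2$.

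The crucial separation argument is: if $n\neq m$ in $A$ and $|j|,|k|\le p$ with $n+j=m+k$, then $x_{n+j}=x_{m+k}$ forces $|c_j-c_k|<2\delta<1$, hence $j=k$ and $n=m$, a contradiction. Consequently, for $n\ne m$ in $A$ and $|j|\le p$, the index $n-m+j$ lies outside $[-p,p]$, so $[B^m(x)-y]_{n-m+j}=x_{n+j}$. Condition $(a)$ follows because $\sum_{n\in A} x_{n+p}e_{n+p}$ is a subseries of $x$ (hence converges with norm $\le C\|x\|$ by unconditionality), and multiplying by the bounded sequence $1/x_{n+p}$ gives convergence of $\sum_{n\in A}e_{n+p}$. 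For $(b)$, the same reasoning applied to $B^m(x)-y$ gives
\[
\Big\|\!\!\sum_{\substack{n\in A\\ n\ne m}}\!\!e_{n-m+j}\Big\|
\;=\;\Big\|\!\!\sum_{\substack{n\in A\\ n\ne m}}\!\!\tfrac{1}{x_{n+j}}[B^m(x)-y]_{n-m+j}\,e_{n-m+j}\Big\|
\;\le\; 2C^{2}\delta \;<\;\varepsilon.
\]
The main obstacle is the separation step, since for bilateral shifts one must rule out coincidences $n+j=m+k$ with $j,k$ of either sign; the symmetric choice $c_j=j+p+1$ with pairwise gaps $\ge 1$ handles this uniformly, after which the remaining verifications are routine unconditional-basis manipulations, as in the unilateral case.
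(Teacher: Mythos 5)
Your proof is correct and follows essentially the same route as the paper's: the same construction $x=\sum_{|j|\le p}y_j\,B^{p-j}\big(\sum_{n\in A}e_{n+p}\big)$ for $ii)\Rightarrow i)$, and for $i)\Rightarrow ii)$ the same separation argument based on a test vector whose coefficients are pairwise separated by $1$, followed by the same unconditionality manipulations. Your choice $c_j=j+p+1$ (all coefficients nonzero) is a minor cosmetic improvement over the paper's $y=\sum_{|j|\le p}j e_j$, since it avoids having to treat the coordinate $j=0$ separately.
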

Since recurrent bilateral backward shifts are hypercyclic \cite{BonGroLopPer22JFA}, the above conditions are also equivalent to hypercyclicity plus almost $\F$-recurrence.

We will only sketch the proof of the theorem because it is similar to the proof  Theorem \ref{ carac PF uni}.
\begin{proof}
By the continuity of $B$ there is $\beta_p$ such that $\|x\|<\beta_p$ then $|x_j|<1/2$ for every $j\leq 2p $.

$i)\Rightarrow ii)$ 
Let $y=\sum_{-p\leq j\leq p} je_j$. Thus, there are $x\in X$ and $A\in \F$ such that $\|B^n(x)-y\|<\min\{\f{\varepsilon}{C},\beta_p\}$ for every $n\in A$, where $C$ is the unconditional basis constant.
Then  $A$ will satisfy conditions $(a)$ and $(b)$. 

For $n\in A$, $|j|\le p$,  $|x_{n+j}-j|<\f{1}{2}$ and thus $|\f{1}{x_{n+j}}|<2$ for $j\ne 0$. 
Unconditionality then implies that 
$\sum_{n\in A}  e_{n+p}= \sum_{n\in A} \f{1}{x_{n+p}}x_{n+p} e_{n+p}$
converges and thus we have $(a)$.

For $(b)$, let $m\in A$. Then we see as in the proof of Theorem \ref{ carac PF uni} that $[B^m(x)-y]_{n-m+j}=x_{n+j}$ for every $n\in A$ and $|j|\leq p$.
Then,
\begin{align*}
\Big\|\sum_{n\in A,n\neq m}  e_{n-m+p}\Big\|=     \Big\|\sum_{n\in A,n\neq m} \f{1}{x_{n+p}} [B^m(x)-y]_{n-m+p} e_{n-m+p}\Big\|\leq 2C\|B^m(x)-y\|\leq \varepsilon.
\end{align*}

$ii)\Rightarrow i)$ By the denseness of the finite sequences, it is
enough to show that for any open ball $U$ centered at a finitely supported vector, there is $x \in U$ such that
$N (x, U ) \in\F$. Let $U$ be an open ball of center $y=\sum_{j=-p}^p y_j e_j$ and radius $\varepsilon>0$.

Let $A\in \F$ given by $ii)$ with to respect $\tilde\varepsilon=\frac{\varepsilon}{M(2p+1)\|y\|_\infty }$, where $M:=\max_{0\le k\le p}\|B\|^k$.
 Considering $\varepsilon<\frac{\beta_p}{M^{2p}}$, we may assume that $|n-m|>p$ for any $n\neq m\in A$.

 Let $x=\sum_{n\in A}\sum_{-p}^p y_je_{n+j}\in X$. 
Since $x=\sum_{j=-p}^py_jB^{p-j}(\sum_{n\in A} e_{n+p})$, it is  well defined by $(a)$. And, by $(b)$, if $m\in A$ we have that
\begin{align*}
    \Big\|B^m(x)-y\Big\|&=\Big\|\sum_{n\neq m, n\in A}\sum_{j=-p}^p y_je_{n+j-m}\Big\| =\Big\|\sum_{j=-p}^p y_j\sum_{n\neq m, n\in A} e_{n+j-m}\Big\|\\ 
    &\leq \sum_{j=-p}^p |y_j|\|B^{p-j}\sum_{n\neq m, n\in A} e_{n+p-m}\| <\varepsilon.
\end{align*}
Let $m_1=\min A$. Then, $B^{m_1}(x)\in U$ and for $m\in A,m>m_1$ we have that $B^{m-m_1}(B^{m_1}(x))=B^m(x)\in U$. By shift invariance this shows that $B$ is almost $\F$-recurrent.

 \end{proof}

Via a conjugation, the result for weighted backward shifts reads as follows.
\begin{theorem}\label{ carac PF bil con pesos}
Let $\F$ be a finitely and left-shift invariant hereditary upward family.
Let $\{e_n :\, n\in \mathbb Z\}$ be a   basis of the Banach space $X$ and $B_w$ a well defined weighted backward shift. Consider the following statements.
\begin{enumerate}
    \item [i)] $B_w$ is almost $\F$-recurrent;
    \item [ii)] for every $\varepsilon>0$ and every  $p\in \mathbb N$ there is a set $A=A_{p,\varepsilon}\in\F$ such that
\begin{enumerate}
\item $\sum_{n\in A} v_{n+p}e_{n+p}$ converges and,
\item for any $m\in A$,
$$\Big\|\sum_{n\neq m, n\in A} v_{n-m+p}e_{n-m+p}\Big\|<\varepsilon,$$
where $v_n=(w_{n+1}\dots w_0)$ in case $n<0$,    $v_0=1$ and $v_n=(w_1\dots w_n)^{-1}$ in case $n>0$.
\end{enumerate}
\end{enumerate}
  Then $ii)\Rightarrow i)$. If the basis is unconditional $ii)\Leftrightarrow i)$.
\end{theorem}

\section{Final comments}\label{final comments}
While we were writing the manuscript we learned about the article \cite{GriLop23} by Grivaux and L\'opez-Mart\'inez. There, it is proved that adjoint operators that are almost reiteratively recurrent have an invariant measure with full support (\cite[Theorem 2.3 and Remark 2.5]{GriLop23}), thus being frequently recurrent (\cite[Theorem 3.3]{GriLop23}).
From this result, part (1) of Corollary \ref{coro to PBF implica PF} can immediately be deduced. On the other hand, it is noteworthy to mention that an application of Theorem \ref{PBF implica PF} together with a previous result by Grivaux and Matheron \cite[Proposition 2.11]{GriMat14} provides an alternative proof of the fact that $T$ supports an invariant measure with full support. Recall that a metric space is said to be Polish if it is separable and complete.
\begin{theorem}[Grivaux and Matheron, \cite{GriMat14}]\label{Matheron}
Let $(X,T)$ be a Polish dynamical system. Assume that $X$ is endowed with a Hausdorff topology $\tau$ coarser than the original topology such that every point of $X$ has a neighborhood basis(with respect to the original topology) consisting of $\tau$ compact sets, and that $T$ is continuous with respect to the topology $\tau$. Then the following assertions are equivalent: 
\begin{enumerate}
\item $T$ admits an invariant measure with full support; 
\item for each open set $V\neq \emptyset$, there is an ergodic measure $\mu_V$ for $T$ such that $\mu_V(V)>0$; 
\item $T$ is almost frequently recurrent.
\end{enumerate}
\end{theorem}
It should be mentioned that the concept of frequently recurrence used in \cite{GriMat14} is actually almost frequent recurrence as we define it here.
\begin{theorem}[Grivaux and L\'opez-Mart\'inez \cite{GriLop23}]
Let $X^*$ be a dual separable Banach space and let $T$ be an adjoint operator that is almost reiteratively recurrent. Then $T$ supports an invariant measure with full support.
\end{theorem}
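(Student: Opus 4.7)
The plan is to combine Theorem~\ref{PBF implica PF} with a known construction of invariant measures from orbits with positive lower density return times.

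First, recall from Example~b) that $b\underline{\mathcal D}=\overline{\mathcal{BD}}$. Hence the assumption that $T$ has the $\PP_{\BD}$ property is exactly the assumption that $T$ has the $\PP_{b\underline{\mathcal D}}$ property. Since $T$ is an adjoint operator, it is automatically $w^*$-$w^*$ continuous on the dual Banach space where it is defined, so Theorem~\ref{PBF implica PF} applies with $\F=\underline{\mathcal D}$ and yields that $T$ has the $\PP_{\underline{\mathcal D}}$ property: for every nonempty open set $U\subseteq X$ there exists $x\in X$ with $\underline d(N(x,U))>0$.

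The second step is to turn this into an invariant measure of full support, which is precisely where the machinery of \cite{GriMat14} enters. Given a vector $x$ with $\underline d(N_T(x,U))>0$, I would consider Ces\`aro averages of Dirac masses along the orbit of $x$, namely $\mu_N^x:=\frac1N\sum_{n=0}^{N-1}\delta_{T^n x}$. Since $X$ is Polish and by taking these measures on a sufficiently large compact metric neighbourhood (e.g.\ a weak$^*$-closed ball, using again that $T$ is $w^*$-$w^*$ continuous), some subsequence converges in an appropriate topology to a $T$-invariant probability measure $\mu_U$ which assigns positive mass to $U$ (the lower density bound guarantees that the mass of $U$ remains bounded below along the averaging). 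This is the standard Krylov--Bogolyubov construction adapted to the linear dynamical setting as in \cite{GriMat14}.

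Finally, to obtain an invariant measure with full support, I would cover $X$ by a countable basis $(U_k)_k$ of nonempty open sets, produce for each $U_k$ an invariant probability measure $\mu_k$ with $\mu_k(U_k)>0$ by the previous step, and then set $\mu:=\sum_k 2^{-k}\mu_k$. Then $\mu$ is a $T$-invariant probability measure with $\mu(U_k)>0$ for every $k$, hence with full support.

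The main obstacle is step two: ensuring that the Ces\`aro averages really admit a limit point that is a genuine probability measure (no mass escapes to infinity) and that the limit is $T$-invariant. This is where the $w^*$-$w^*$ continuity of $T$ and the boundedness of the relevant orbits in the weak$^*$ topology are used; the argument is essentially the one recorded in \cite{GriMat14}, which is why the theorem is attributed to Grivaux and L\'opez-Mart\'inez---the novelty here is that Theorem~\ref{PBF implica PF} allows one to bypass their direct construction and reduce the question to the already-known implication ``$\PP_{\underline{\mathcal D}}\Rightarrow$ invariant measure of full support''.
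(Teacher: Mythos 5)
Your proposal is correct and follows essentially the same route as the paper: reduce $\PP_{\overline{\mathcal{BD}}}=\PP_{b\underline{\mathcal D}}$ to the $\PP_{\underline{\mathcal D}}$ property via Theorem~\ref{PBF implica PF} and the $w^*$-$w^*$ continuity of adjoint operators, and then invoke \cite[Proposition 2.11]{GriMat14} to pass from positive lower density return times to an invariant measure with full support. The only difference is that you additionally sketch the Krylov--Bogolyubov-type argument behind the cited result from \cite{GriMat14}, which the paper simply uses as a black box.
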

\begin{proof}
Since $T$ is almost reiteratively recurrent and it is an adjoint operator, by Theorem \ref{PBF implica PF}, it follows that it is almost ${\mathcal D}$-recurrent. Theorem \ref{Matheron}, applied to the $w^*$-topology (using that closed balls are $w^*$-compact) then implies that $T$ supports an invariant measure with full support.
\end{proof}

We would like to end the article with some open problems on almost recurrence.

It is of course clear that an $\F$-recurrent operator is almost $\F$-recurrent.
If $\F=\F_\infty$, the family of infinite subsets, then by \cite[Proposition 2.1]{CosManPar14}, almost ${\F_\infty}$-recurrence implies $\F_\infty$-recurrence. The implication almost $\F$-recurrence $\Rightarrow \, \F$-recurrence also holds when $\F=\mathcal{AP}$ is the family of subsets containing arbitrarily long arithmetic progressions (see \cite[Proposition 5.4]{CardeMur22multipleScand}).
If the operator is an adjoint operator then we know of some other instances where this implication holds: when $\F=\kAP$ or $\F=\mathcal {IAP}$ (see Corollary \ref{coro to PBF implica PF}), when $\F=\BD,$ $\F=\overline{\mathcal{D}},$ or $\F=\underline{\mathcal{D}}$ 
(see \cite[Theorem 3.3 and Remark 2.5]{GriLop23}).
We are currently unaware of the answer to the following
question.
\begin{question}\label{question A}
Let $\F$ be an hereditary upward family, does   almost $\F$-recurrence  imply $\F$-recurrence? Does this implication hold for the family of sets with positive lower density or of syndetic sets?
\end{question}

A related problem is whether the analogous of Theorem \ref{PBF implica PF} holds for $\F$-recurrence.

\begin{question}\label{question B}
Let $T$ be an adjoint operator that is $b\F$-recurrent. Is $T$ $\F$-recurrent? 
\end{question} 
 The answer is known to be affirmative for the families   $\F=\mathcal {IAP}$ and $\F=\underline{\mathcal D}$  \cite{CardeMur22,GriLop23}. 
Note that a positive answer to Question \ref{question A} (for adjoint operators) implies a positive answer to Question \ref{question B}.

By Corollary \ref{coro: equiv almost F-rec F-hyp}, for uniformly finitely invariant upper families, any unilateral backward shift operator is $\F$-hypercyclic whenever it is almost $\F$-recurrent and $\F$ is an upper family.  The analogous result is unclear for bilateral backward shifts.

\begin{question}
Let $\F$ be an upper family and let $B$ be an almost $\F$-recurrent bilateral backward shift. Is $B$ $\F$-hypercyclic?
\end{question} 
For $\F=\underline{ \mathcal{D}}$ we could also ask the following (see also \cite[Question 5.3]{BonGroLopPer22JFA}). 
\begin{question}
Let $B$ be an almost frequently recurrent unilateral backward shift. Is $B$ frequently recurrent? Is $B$ frequently hypercyclic?
\end{question}

Theorem \ref{teo:argyros} leads us to the following natural question.
\begin{question}\label{pregunta existe reiterative no existe frequente}
 Does there exist a Banach space that admits a reiteratively hypercyclic
operator but not any frequently hypercyclic operator?
\end{question}



\end{document}